\documentclass{amsart}
\usepackage{graphics}
\usepackage{graphicx}
\usepackage{amsfonts,amsmath,mathrsfs}
\begin{document}

 \newtheorem{theorem}{Theorem}[section]
 \newtheorem{coro}[theorem]{Corollary}
 \newtheorem{lemma}[theorem]{Lemma}{\rm}
 \newtheorem{proposition}[theorem]{Proposition}

 \newtheorem{defn}[theorem]{Definition}{\rm}
 \newtheorem{ass}[theorem]{Assumption}
 \newtheorem{remark}[theorem]{Remark}
 \newtheorem{ex}{Example}
 
\numberwithin{equation}{section}
\newcommand{\bbR}{\mathbb{R}}
\newcommand{\bbC}{\mathbb{C}}
\newcommand{\bbZ}{\mathbb{Z}}

\def\la{\langle}
\def\ra{\rangle}
\def\glexe{\leq_{gl}\,}
\def\glex{<_{gl}\,}
\def\e{{\rm e}}

\def\fac{{\rm !}}
\def\x{\mathbf{x}}
\def\P{\mathbf{P}}
\def\S{\mathbf{S}}
\def\h{\mathbf{h}}
\def\m{\mathbf{m}}
\def\y{\mathbf{y}}
\def\bz{\mathbf{z}}
\def\F{\mathcal{F}}
\def\R{\mathbb{R}}
\def\T{\mathbf{T}}
\def\N{\mathbb{N}}
\def\D{\mathbf{D}}
\def\V{\mathbf{V}}
\def\U{\mathbf{U}}
\def\K{\mathbf{K}}
\def\Q{\mathbf{Q}}
\def\M{\mathbf{M}}
\def\oM{\overline{\mathbf{M}}}
\def\O{\mathbf{O}}
\def\C{\mathbb{C}}
\def\Z{\mathbb{Z}}
\def\bZ{\mathbf{Z}}
\def\H{\mathbf{H}}
\def\A{\mathbf{A}}
\def\V{\mathbf{V}}
\def\AA{\overline{\mathbf{A}}}
\def\B{\mathbf{B}}
\def\c{\mathbf{C}}
\def\L{\mathscr{L}}
\def\bS{\mathbf{S}}
\def\I{\mathbf{I}}
\def\Y{\mathbf{Y}}
\def\X{\mathbf{X}}
\def\cX{\mathbf{X}}
\def\G{\mathbf{G}}
\def\f{\mathbf{f}}
\def\z{\mathbf{z}}
\def\v{\mathbf{v}}
\def\y{\mathbf{y}}
\def\x{\mathbf{x}}
\def\bI{\mathbf{I}}
\def\y{\mathbf{y}}
\def\g{\mathbf{g}}
\def\w{\mathbf{w}}
\def\b{\mathbf{b}}
\def\a{\mathbf{a}}
\def\p{\mathbf{p}}
\def\u{\mathbf{u}}
\def\bv{\mathbf{v}}
\def\q{\mathbf{q}}
\def\e{\mathbf{e}}
\def\s{\mathcal{S}}
\def\cc{\mathcal{C}}
\def\co{{\rm co}\,}
\def\tg{\tilde{g}}
\def\tx{\tilde{\x}}
\def\tg{\tilde{g}}
\def\tA{\tilde{\A}}
\def\bell{\boldsymbol{\ell}}
\def\bxi{\boldsymbol{\xi}}
\def\balpha{\boldsymbol{\alpha}}
\def\bbeta{\boldsymbol{\beta}}
\def\bgamma{\boldsymbol{\gamma}}
\def\bpsi{\boldsymbol{\psi}}
\def\bmu{\boldsymbol{\mu}}
\def\supmu{{\rm supp}\,\mu}
\def\supp{{\rm supp}\,}
\def\cd{\mathcal{C}_d}
\def\cok{\mathcal{C}_{\K}}
\def\cop{COP}
\def\vol{{\rm vol}\,}
\def\om{\mathbf{\Omega}}
\def\f{\mathscr{F}}
\def\la{\langle\langle}
\def\ra{\rangle\rangle}
\def\blambda{\boldsymbol{\lambda}}
\def\btheta{\boldsymbol{\theta}}
\def\bphi{\boldsymbol{\phi}}
\def\bpsi{\boldsymbol{\psi}}
\def\bnu{\boldsymbol{\nu}}
\def\bom{\boldsymbol{S}}
\def\fac{{\rm !}}
\def\tM{\hat{\M}}
\def\tv{\hat{\v}}

\title[Polynomial Optimization and Christoffel function]{Polynomial Optimization, Certificates of Positivity, and Christoffel Function}

\thanks{Research supported by the AI Interdisciplinary Institute ANITI  funding through the french program
``Investing for the Future PI3A" under the grant agreement number ANR-19-PI3A-0004. It has also received funding from the European Union's Horizon 2020 research and innovation programme under the Marie Sklodowska-Curie grant
agreement $N^o$ 813211 (POEMA).
This research is also part of the programme DesCartes and is supported by the National Research Foundation, Prime Minister's Office, Singapore under its Campus for Research Excellence and Technological Enterprise (CREATE) programme.}

\author{Jean B. Lasserre}
\address{LAAS-CNRS and Institute of Mathematics\\
University of Toulouse\\
LAAS, 7 avenue du Colonel Roche\\
31077 Toulouse C\'edex 4, France\\
Tel: +33561336415}
\email{lasserre@laas.fr}

\date{}

\begin{abstract}We briefly recall basics of the Moment-SOS hierarchy in polynomial optimization and the Christoffel-Darboux kernel (and the Christoffel function (CF)) in theory of approximation and orthogonal polynomials. We then (i) show  a strong link between the CF and the SOS-based positive certificate at the core of the Moment-SOS hierarchy, and (ii) describe how the CD-kernel provides a simple interpretation of the 
SOS-hierarchy of lower bounds as searching for some signed polynomial density (while the SOS-hierarchy of upper bounds 
is searching for a positive (SOS) density). This link between the CF and positive certificates, in turn allows us (i) to establish a disintegration property of the CF
much like for measures, and (ii) for certain sets, to relate the CF of their equilibrium measure with a certificate of positivity on the set, for constant polynomials.
\end{abstract}
\maketitle
\tableofcontents

\section{Introduction}

In this chapter we describe (in our opinion, surprising) links between different fields, 
namely optimization -- convex duality -- certificates of positivity in real algebraic geometry on the one hand, and orthogonal polynomials -- Christoffel function -- approximation -- equilibrium measures, on the other hand.
More precisely, consider the polynomial optimization problem:
\begin{equation}
\label{def-pb-P}
\P:\quad f^*\,=\,\min\,\{f(\x):\: \x\,\in\,\bom\,\}\,,
\end{equation}
where $f$ is a polynomial and $\bom$ is a basic semi-algebraic set\footnote{A basic semi-algebraic set
in the intersection of finitely many sublevel sets of polynomials.}. Importantly, $f^*$ in \eqref{def-pb-P} is understood as the \emph{global} minimum of $\P$ and \emph{not} a local minimum.
As a polynomial optimization problem, $\P$ is NP-hard in general. However, in the early 2000
the Moment-SOS hierarchy (SOS stands for ``sum-of-squares") has emerged as a new methodology for solving $\P$. Its distinguishing feature is (i) to exploit powerful certificates of positivity from real algebraic geometry (and dual results on the $\bom$-moment problem) and (ii) combine them with the computational power of semidefinite programming in conic optimization, to obtain a \emph{hierarchy} of (convex) semidefinite relaxations of $\P$ of increasing size. 

The optimal values of such semidefinite relaxations provide a \emph{monotone non decreasing sequence} of certified lower bounds which converges to the global minimum $f^*$.
In addition, finite convergence is generic and when there are finitely many global minimizers, they can be obtained (also generically) from the optimal solutions of the exact semidefinite relaxation, via a simple linear algebra routine. 

Moreover, this methodology is easily adapted to solve the \emph{Generalized Moment Problem} (GMP) whose list of potential applications in mathematics, computer science, probability \& statistics, quantum information, and many areas of engineering, is almost endless. For a
detailed description of the methodology and an account of several of its applications, the interested reader is referred to e.g. the books \cite{book-2,book-1,CUP} and the many references therein. 
Less known is another (still SOS-based) hierarchy but now with an associated monotone non increasing sequence 
of \emph{upper bounds} which converges to $f^*$. While very general in its underlying principle, its practical implementation requires the feasible set $\bom$ to have a ``simple" geometry like a box, a simplex, an ellipsoid, a hypercube, or their image by an affine mapping, and recently, rates of its asymptotic convergence have been obtained in e.g. \cite{deKlerk-1,slot-1,slot-2,slot-3}.

Crucial at each step $t$ of the Moment-SOS hierarchy of lower bounds, is a dual pair 
of semidefinite programs associated with a dual pair $(C_t,C_t^*)$ of convex cones.
By a duality result of Nesterov \cite{nesterov},  the respective interiors of $C_t$ and $C^*_t$
are in a simple one-to-one correspondence. In fact, its recent interpretation in \cite[Lemma 3]{cras-1} states that every polynomial 
$p\in \mathrm{int}(C_t)$ has a distinguished 
SOS-based representation in terms of Christoffel functions associated with some moment-sequence 
$\bphi_p\in\mathrm{int}(C^*_t)$. (In particular, every degree-$2t$ SOS $p$ in the interior 
of the convex cone $\Sigma_t$ of SOS of degree at most $2t$, is the reciprocal of the Christoffel function of some linear functional $\bphi_p\in\Sigma_t^*$). In turn this duality result can be exploited to reveal additional properties of the CF. For instance we use it to
obtain a \emph{disintegration} property of the CF \cite{cras-1}, very much in like for measures on a Cartesian product of Borel spaces. 
Also, for certain compact sets
we can relate the CF of their equilibrium measure with a certain SOS-based representation of constant polynomials. Finally,
we reveal an interpretation of the latter representation \cite{cras-3} related to what we call a \emph{generalized polynomial Pell equation}
(an equation which originates in algebraic number theory).

So in this chapter we first briefly review basics of the moment-SOS hierarchies of lower and upper bounds.
We next introduce the Christoffel-Darboux kernel (CD-kernel) and the Christoffel function (CF) 
and describe some of their basic properties, which in our opinion are interesting on their own and should deserve more
attention from the optimization community. We then describe 
our interpretation of Nesterov's duality result to establish a strong link between the Christoffel functions and
the SOS-based positivity certificate used in the Moment-SOS hierarchy.
Conversely, we also describe how this duality result of convex analysis can be used to provide 
a disintegration property of the Christoffel function 
and a result on equilibrium measures of certain compact semi-algebraic sets. 

We hope that this brief account on links between seemingly distinct disciplines will raise curiosity from the optimization community.

\section{Notation, definitions and preliminary results}
Let $\R[\x]$ denote the ring of real polynomials in the variables $\x=(x_1,\ldots,x_n)$ and let
$\R[\x]_t\subset\R[\x]$ (resp. $\Sigma[\x]_t\subset\R[\x]$) be its subset 
of polynomials of degree at most $t$ (resp. sum-of-squares (SOS) polynomials of degree at most $2t$).
Let $\N^n_t:=\{\balpha\in\N^n:\vert\balpha\vert\leq t\}$
(where $\vert\balpha\vert=\sum_i\alpha_i$) with cardinal $s(t)={n+t\choose n}$. Let $\bv_t(\x)=(\x^{\balpha})_{\balpha\in\N^n_t}$ 
be the vector of monomials up to degree $t$. Then $p\in\R[\x]_t$ reads
\[\x\mapsto p(\x)\,=\,\langle\p,\v_t(\x)\rangle\,,\quad\forall \x\in\R^n\,,\]
where $\p\in\R^{s(t)}$ is the vector of coefficients of $p$ in the basis $(\x^{\balpha})_{\balpha\in\N^n}$.

Given a closed set $\mathcal{X}\subset\R^n$, denote by $\mathscr{M}(\mathcal{X})$ 
(resp. $\mathscr{M}(\mathcal{X})_+$) the space of finite signed Borel measures 
(resp. the convex cone of finite Borel measures) on $\mathcal{X}$.
The support $\mathrm{supp}(\mu)$ of a Borel measure $\mu$ on $\R^n$ is the smallest closed set $A$ such that
$\mu(\R^n\setminus A)=0$, and such a  set $A$ is unique.\\

\noindent
{\bf Riesz linear functional}\hspace{0.2cm}
With any real sequence $\bphi=(\phi_{\balpha})_{\balpha\in\N^n}$ (in bold letter)  is associated the Riesz linear functional $\phi\in\R[\x]^*$ (not in bold) defined by:
\[p\:(=\sum_{\balpha\in\N^n}p_{\balpha}\,\x^{\balpha})\quad\mapsto\quad \phi(p)\,:=\,\sum_{\balpha\in\N^n}p_{\balpha}\,\phi_{\balpha}\,=\,\langle\p,\bphi\rangle\,,\quad\forall p\in\R[\x]\,.\]
A sequence $\bphi$ has a representing measure 
if there exists a Borel measure $\phi\in\mathscr{M}(\R^n)_+$ such that $\phi_{\balpha}=\int \x^{\balpha}\,d\phi$ for all $\balpha\in\N^n$, in which case 
\[\phi(p)\,=\,\int p\,d\phi\,,\quad \forall p\in\R[\x]\,.\]
Given a sequence $\bphi=(\phi_{\balpha})_{\balpha\in\N^n}$ and a polynomial $g\in\R[\x]$ ($\x\mapsto g(\x):=\sum_{\bgamma}g_{\bgamma}\,\x^{\bgamma}$), denote by $g\cdot\bphi$ the new sequence
$(g\cdot\bphi)_{\balpha}:=\sum_{\bgamma}g_{\bgamma}\,\phi_{\balpha+\bgamma}$, $\balpha\in\N^n$,
with associated Riesz linear functional $g\cdot\phi\in\R[\x]^*$:
\[g\cdot\phi(p)\,=\,\phi(g\,p)\,,\quad\forall p\in\R[\x]\,.\]
\noindent
{\bf Moment matrix}\hspace{0.2cm}
With $t\in\N$, the moment matrix $\M_t(\phi)$ associated with a real sequence 
$\bphi=(\phi_{\balpha})_{\balpha\in\N^n}$
 is the real symmetric matrix $\M_t(\bphi)$
with rows and columns indexed by $\N^n_t$, and with entries
\[\M_t(\bphi)(\balpha,\bbeta)\,:=\,\phi(\x^{\balpha+\bbeta})\,=\,\phi_{\balpha+\bbeta}\,,\quad\balpha,\bbeta\in\N^n_t\,.\]
If $\bphi$ has a representing measure $\phi$ then necessarily 
$\M_t(\bphi)$ is positive semidefinite 
(denoted $\M_t(\bphi)\succeq0$ or $\M_t(\phi)\succeq0$) for all $t$. But the converse is not true in general.

\noindent
{\bf Localizing matrix}\hspace{0.2cm}
Similarly, with $t\in\N$, the localizing matrix $\M_t(g\cdot\bphi)$ associated with a real sequence 
$\bphi=(\phi_{\balpha})_{\balpha\in\N^n}$ and a polynomial $\x\mapsto g(\x)=\sum_{\bgamma}g_{\bgamma}\x^{\bgamma}$, is the real symmetric matrix $\M_t(g\cdot\bphi)$
with rows and columns indexed by $\N^n_t$, and with entries
\[\M_t(g\cdot\bphi)(\balpha,\bbeta)\,:=\,g\cdot\phi(\x^{\balpha+\bbeta})\,=\,\phi(g\,\x^{\balpha+\bbeta})\,=\,
\sum_{\bgamma}g_{\bgamma}\,\phi_{\balpha+\bbeta+\bgamma}\,,\quad\balpha,\bbeta\in\N^n_t\,.\]
Equivalently, $\M_t(g\cdot\bphi)$ is the moment matrix of the sequence $g\cdot\bphi$.\\

\noindent
{\bf Orthonormal polynomials.}
Let $\bphi=(\phi_{\balpha})_{\balpha\in\N^n}$ be a real sequence such that
$\M_t(\bphi)$ is positive definite (denoted $\M_t(\bphi)\succ0$)
for all $t$. Then with $\bphi$ one may associate a family 
of  \emph{orthonormal polynomials} 
$(P_{\balpha})_{\balpha\in\N^n}\subset\R[\x]$, i.e., which satisfy:
\begin{equation}
\label{ortho}
\phi(P_{\balpha}\cdot P_{\bbeta})\,=\,\delta_{\balpha=\bbeta}\,,\quad \forall \balpha,\bbeta\in\N^n\,,
\end{equation}
where $\delta_{\bullet}$ is the Kronecker symbol. One way to obtain the $P_{\balpha}$'s is via certain determinants 
formed from entries of $\M_t(\bphi)$, For instance, in dimension $n=1$, $P_0=1$ and 
\[P_1\,=\,\tau_1\cdot\mathrm{det}\left[\begin{array}{cc}
\phi_0 &\phi_1\\1 & x\end{array}\right]\,\quad
P_2\,=\,\tau_2\cdot\mathrm{det}\left[\begin{array}{ccc}
\phi_0 &\phi_1&\phi_2\\
\phi_1 &\phi_2&\phi_3\\
1 & x& x^2\end{array}\right]\,,\quad\mbox{etc.,}\]
with $\tau_k$ being a scalar that ensures
$\phi(P_k^2)=1$, $k\in\N$. For more details the interested reader is referred to e.g. \cite{Xu,annals-prob}.\\

\noindent
{\bf Putinar's Positivstellensatz.}
Let  $g_0:=1$ and  $G:=\{g_0,g_1,\ldots,g_m\}\subset\R[\x]$
with $t_g:=\lceil \mathrm{deg}(g)/2\rceil$ for all $g\in G$. Let
\begin{equation}
\label{set-S}
\bom\,:=\,\{\,\x\in\R^n\::\: g(\x)\,\geq\,0\,,\quad \forall g\,\in\,G\,\}\,,
\end{equation}
and define the sets
\begin{eqnarray}
\label{set-Q(G)}
Q(G)&=&\{\,\sum_{g\in G}\sigma_g\, g\,;\quad \sigma_g\,\in\,\Sigma[\x]\,,\:\forall g\in G\}\\
\label{set-Q_t(G)}
Q_t(G)&=&\{\,\sum_{g\in G}\sigma_g\, g\,;\quad \mathrm{deg}(\sigma_g\,g)\,\leq 2t\,,\:\forall g\in G\}\,,
\end{eqnarray}
called respectively the \emph{quadratic module} and the \emph{$t$-truncated quadratic module} associated with $G$.

\begin{remark}
\label{rem:archimedean}
With $R>0$, let $\x\mapsto \theta(\x):=R-\Vert\x\Vert^2$. The quadratic module $Q(G)\subset\R[\x]$ is said to be 
\emph{Archimedean} if there exists $R>0$ such that $\theta\in Q(G)$, in which case it provides an algebraic certificate that the set $\bom$ in \eqref{set-S} is compact.
If one knows that $\bom\subset\{\x: \Vert\x\Vert^2\leq R\}$ for some $R$ then it is a good idea to include the additional (but redundant)
constraint $R-\Vert\x\Vert^2\geq0$ in the definition \eqref{set-S} of $\bom$, in which case the resulting associated quadratic 
module $Q(G)$ is Archimedean.
\end{remark}
\begin{theorem}[Putinar \cite{putinar}]
\label{th:putinar}
Let $\bom$ be as in \eqref{set-S} and let $Q(G)$ be Archimedean. 

(i) If $p\in\R[\x]$ is strictly positive on $\bom$ then $p\in Q(G)$.

(ii) A real sequence $\bphi=(\phi_{\balpha})_{\balpha\in\N^n}$ has a representing 
Borel measure on $\bom$ if and only if $\M_t(g\cdot\bphi)\succeq0$ for all $t\in\N$, and all $g\in G$.
\end{theorem}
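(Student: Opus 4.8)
The plan is to route both parts through a single functional-analytic core, since they are dual faces of the same fact. The easy half is the ``only if'' direction of (ii): if $\bphi$ is the moment sequence of some measure $\phi\in\mathscr{M}(\bom)_+$, then for any $g\in G$ and any coefficient vector $\u$ one has $\u^{\top}\M_t(g\cdot\bphi)\,\u=\int_{\bom} g\cdot q^2\,d\phi\ge0$, where $q\in\R[\x]_t$ is the polynomial with coefficient vector $\u$; this is nonnegative because $g\ge0$ on $\bom\supseteq\mathrm{supp}(\phi)$. Hence $\M_t(g\cdot\bphi)\succeq0$ for every $t$ and every $g\in G$, with no appeal to the Archimedean hypothesis.

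For the two substantive implications I would isolate the following core claim: \emph{if $Q(G)$ is Archimedean and $\phi\in\R[\x]^*$ satisfies $\phi(q)\ge0$ for all $q\in Q(G)$ and $\phi(1)=1$, then $\phi=\int(\cdot)\,d\mu$ for a probability measure $\mu$ supported on $\bom$.} I would prove it by a Gelfand--Naimark--Segal construction. Nonnegativity of $\phi$ on the squares makes $(p,q)\mapsto\phi(p\,q)$ a positive semidefinite bilinear form; quotienting by its kernel and completing gives a Hilbert space $H$ on which the multiplication operators $A_i:p\mapsto x_i\,p$ act symmetrically. The Archimedean hypothesis is exactly what controls them: since $R-\Vert\x\Vert^2\in Q(G)$ and a quadratic module absorbs multiplication by squares, $(R-\Vert\x\Vert^2)\,q^2\in Q(G)$, so $\phi(\Vert\x\Vert^2 q^2)\le R\,\phi(q^2)$, i.e.\ $\sum_i A_i^2\preceq R\,\mathrm{Id}$. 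Thus the $A_i$ are bounded, commuting, self-adjoint operators, and their joint spectral measure $E$ (supported on a compact subset of $\R^n$) produces $\mu(\cdot):=\langle 1,E(\cdot)\,1\rangle$ with $\phi(p)=\int p\,d\mu$. Finally $\mathrm{supp}(\mu)\subseteq\bom$: for each $g\in G$ and each SOS $\sigma$ one has $\int g\,\sigma\,d\mu=\phi(g\,\sigma)\ge0$, which forces $g\ge0$ $\mu$-almost everywhere and hence on $\mathrm{supp}(\mu)$.

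Granting the core claim, the ``if'' direction of (ii) is immediate: the hypothesis $\M_t(g\cdot\bphi)\succeq0$ for all $t$ and $g$ says precisely that $\phi(\sigma_g\,g)\ge0$ for every SOS $\sigma_g$ and every $g\in G$, hence $\phi\ge0$ on all of $Q(G)$; normalizing by $\phi(1)$, which is positive because $1$ is an order unit of the Archimedean module, the core claim delivers the representing measure on $\bom$. For part (i) I would pass to the closure $\overline{Q(G)}$ of $Q(G)$ in the order-unit topology. By Hahn--Banach duality $p\in\overline{Q(G)}$ iff $L(p)\ge0$ for every $L\ge0$ on $Q(G)$, and by the core claim such normalized $L$ are exactly the probability measures on $\bom$; therefore $\overline{Q(G)}=\{p\in\R[\x]:p\ge0\text{ on }\bom\}$. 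Given $p>0$ on the compact set $\bom$ there is $\epsilon>0$ with $p-\epsilon\ge0$ on $\bom$, so $p-\epsilon\in\overline{Q(G)}$; since $1$ is an order unit, adding back the positive constant $\epsilon$ pushes $p$ into $Q(G)$ itself, giving $p\in Q(G)$.

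I expect the main obstacle to be the core claim, and inside it the simultaneous control of two things: turning the positive semidefinite form into an \emph{honest representing measure} (this is where the operator-boundedness from the Archimedean bound, together with the commuting spectral theorem, does the real work, and where the general multivariate moment problem would otherwise fail), and pinning the support into $\bom$ from the one-sided inequalities $\phi(g\,\sigma)\ge0$. The order-unit closedness argument needed in part (i) is the other delicate point, and it too relies essentially on $1$ being an order unit, i.e.\ on the Archimedean hypothesis.
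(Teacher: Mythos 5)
The paper does not prove this theorem at all: it is quoted verbatim from Putinar's 1993 paper as a black box, so there is no internal proof to compare against. Your sketch is, in substance, the now-standard proof of Putinar's Positivstellensatz (the GNS/spectral-theorem argument for the moment-problem half, going back to Putinar's original operator-theoretic approach, and the Hahn--Banach/order-unit separation argument of Jacobi--Prestel for the certificate half), and its architecture is sound: the easy direction of (ii) is right, the Archimedean bound $\phi((R-\Vert\x\Vert^2)q^2)\ge0$ does give $\sum_i A_i^2\preceq R\,\mathrm{Id}$ and hence bounded commuting self-adjoint extensions, and the bipolar/order-unit mechanism correctly converts the core claim into part (i), since $q\in\overline{Q(G)}$ in the order-unit seminorm does imply $q+\epsilon\in Q(G)$ for every $\epsilon>0$.

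Two places deserve to be made explicit rather than asserted. First, your repeated appeal to ``$1$ is an order unit'' is itself a lemma: from $R-\Vert\x\Vert^2\in Q(G)$ one must show that for \emph{every} $p\in\R[\x]$ there is $N$ with $N\pm p\in Q(G)$; this follows because the set of such $p$ is a subring of $\R[\x]$ containing each $x_i$ (e.g.\ via $2(N\pm x_i)=(N-x_i^2)+(x_i\pm1)^2+(N-1)$), but it is exactly the step where Archimedeanness of the module, rather than mere compactness of $\bom$, is consumed, and parts of your argument (continuity of positive functionals, $\phi(1)=0\Rightarrow\phi=0$, the separation step) silently rely on it. Second, the support claim ``$\int g\,\sigma\,d\mu\ge0$ for all SOS $\sigma$ forces $g\ge0$ $\mu$-a.e.'' needs the approximation argument: for continuous $h\ge0$ on the (compact) support of $\mu$, take polynomials $p_k\to\sqrt{h}$ uniformly, so $p_k^2\to h$ uniformly and $\int g\,h\,d\mu\ge0$ for all such $h$, whence $g\ge0$ on $\mathrm{supp}(\mu)$ by continuity. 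With those two lemmas supplied, the proof is complete and is the argument one finds in the standard references.
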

Theorem \ref{th:putinar} is central to prove convergence of the Moment-SOS hierarchy of
lower bounds on $f^*$, described in Section \ref{sec:lower-bounds}. \\

\noindent
{\bf Another Positivstellensatz}
We next provide an alternative Positivstellensatz where the compact set $\bom$ is not required to be semi-algebraic. Given a real sequence 
$\bphi=(\phi_{\balpha})_{\balpha\in\N^n}$, define the convex cones
\begin{eqnarray}
\label{cone-ts}
C^{\bphi}_{t,s}&:=&\{\,g\in \R[\x]_t:\: \M_s(g\cdot\bphi)\,\succeq\,0\,\}\,,\quad t\,,s\,\in\N\,.
\end{eqnarray}
Let $t$ be fixed. Observe that for each $s$, the convex cone $C^{\bphi}_{t,s}$
 is defined in terms of the single linear matrix inequality $\M_s(g\cdot\bphi)\succeq0$,
on the coefficients $(g_{\balpha})$ of $g\in\R[\x]_t$. It defines a spectrahedron in the space $\R^{s(t)}$ of 
the coefficient vector $\g=(g_{\balpha})\in\R^{s(t)}$ of $g\in\R[\x]_t$
(recall that $s(t)={n+t\choose n}$). It is a closed convex cone.

 \begin{theorem}[\cite{siopt}]
\label{th:lasserre-pos}
Let $\bom\subset\R^n$ be a compact set and let $\phi$ be an arbitrary finite 
Borel measure on $\R^n$ whose support is $\bom$ and with
moments $\bphi=(\phi_{\balpha})_{\balpha\in\N^n}$.
Then $g\in\R[\x]$ is nonnegative on $\bom$ if and only if $\M_s(g\cdot\bphi)\succeq0$ for all $s\in\N$.
\end{theorem}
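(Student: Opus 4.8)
The plan is to convert the infinite family of matrix inequalities into a single integral condition, dispose of the easy implication at once, and concentrate the real work on the converse by contraposition together with a density argument. First I would record the quadratic-form identity behind the statement: for $p\in\R[\x]_s$ with coefficient vector $\p$ (so that $p=\langle\p,\v_s(\x)\rangle$), the very definition of the localizing matrix gives
\[
\langle\p,\M_s(g\cdot\bphi)\,\p\rangle
=\sum_{\balpha,\bbeta}p_{\balpha}p_{\bbeta}\,\phi(g\,\x^{\balpha+\bbeta})
=\phi\big(g\,p^2\big)=\int g\,p^2\,d\phi .
\]
Since $p^2$ is captured by $\M_s$ for every $s\geq\deg p$, the condition ``$\M_s(g\cdot\bphi)\succeq0$ for all $s\in\N$'' is \emph{equivalent} to ``$\int g\,p^2\,d\phi\geq0$ for every $p\in\R[\x]$''. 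It thus suffices to show that $g\geq0$ on $\bom$ if and only if $\int g\,p^2\,d\phi\geq0$ for all $p\in\R[\x]$. The forward implication is immediate: if $g\geq0$ on $\bom=\supp\phi$, then $g\,p^2\geq0$ $\phi$-almost everywhere, whence $\int g\,p^2\,d\phi\geq0$.

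For the converse I would argue by contraposition. Suppose $g(\a)<0$ for some $\a\in\bom$. By continuity of $g$ there are $\delta,\varepsilon>0$ with $g<-\varepsilon$ on $B(\a,\delta)\cap\bom$, and because $\a\in\supp\phi$ the inner ball carries positive mass, $\phi\big(B(\a,\delta/2)\big)>0$. Using that $\bom$ is compact (Urysohn/Tietze) I would choose a continuous $\psi:\bom\to[0,1]$ equal to $1$ on $B(\a,\delta/2)\cap\bom$ and vanishing off $B(\a,\delta)\cap\bom$; then $g\,\psi^2\leq-\varepsilon\,\psi^2$ pointwise on $\bom$, so
\[
\int g\,\psi^2\,d\phi\;\leq\;-\varepsilon\,\phi\big(B(\a,\delta/2)\big)\;<\;0 .
\]

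It remains to replace the (non-polynomial) bump $\psi$ by an honest polynomial without losing strict negativity of the integral. Here I would invoke Stone--Weierstrass, again using compactness of $\bom$, to pick $p\in\R[\x]$ with $\Vert p-\psi\Vert_{\infty}<\eta$ on $\bom$. Writing $g(p^2-\psi^2)=g\,(p-\psi)(p+\psi)$ and using that $g$ is bounded on the compact set $\bom$ and that $\phi$ is finite, the error $\big|\int g\,(p^2-\psi^2)\,d\phi\big|$ is controlled by $\Vert g\Vert_{\infty}\,\eta\,(\eta+2)\,\phi(\bom)$; taking $\eta$ small enough forces $\int g\,p^2\,d\phi<0$, contradicting the integral condition and closing the converse. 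This final approximation is exactly the step I expect to be the main obstacle, and it is where compactness of $\bom$ is indispensable---both to ensure density of polynomials in $C(\bom)$ and to bound $g$ uniformly---while the defining property that every neighborhood of a support point has positive mass is what keeps $\int g\,\psi^2\,d\phi$ bounded away from $0$ to begin with.
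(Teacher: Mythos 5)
Your argument is correct, but it is not the route taken in the cited source \cite{siopt} (the paper itself states Theorem~\ref{th:lasserre-pos} without proof, giving only the easy direction inline, exactly as you do via the identity $\langle\p,\M_s(g\cdot\bphi)\p\rangle=\int g\,p^2\,d\phi$). The original proof of the hard direction is moment-theoretic: positive semidefiniteness of all the matrices $\M_s(g\cdot\bphi)$, combined with the growth bound on the moments of $g\cdot\bphi$ coming from compactness of $\mathrm{supp}(\phi)$ (a Carleman-type condition), shows that the sequence $g\cdot\bphi$ is the moment sequence of a \emph{determinate} nonnegative Borel measure $\nu$; since $\nu$ and the signed measure $g\,d\phi$ share all moments, determinacy forces $g\,d\phi=\nu\geq0$, whence $g\geq0$ on $\mathrm{supp}(\phi)$. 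Your contrapositive argument --- a Urysohn bump $\psi$ localized where $g<-\varepsilon$, the lower bound $\phi(B(\a,\delta/2))>0$ from the definition of support, and a Stone--Weierstrass replacement of $\psi$ by a polynomial $p$ with the error $\Vert g\Vert_\infty\,\eta\,(\eta+2)\,\phi(\bom)$ controlled --- is complete and more elementary, avoiding determinacy theory altogether. What the moment-theoretic proof buys in exchange is exactly the extension the paper alludes to after the theorem: for non-compact $\bom$ with $\sup_i\int e^{\vert x_i\vert}d\phi<\infty$ the same determinacy argument still runs, whereas your approach is tied to compactness twice over (density of polynomials in $\mathscr{C}(\bom)$ and uniform boundedness of $g$) and does not generalize to that setting.
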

Theorem \ref{th:lasserre-pos} 
is central to prove the convergence of the 
Moment-SOS hierarchy of upper bounds on $f^*$, described in Section \ref{sec:upper-bounds}.
With $t$ fixed, $(C^{\bphi}_{t,s})_{s\in\N}$ provides a monotone non increasing sequence of 
convex cones $C^{\bphi}_{t,s}$, each being an outer approximation of the convex cone
$\mathscr{C}_t(\bom)_+$  of polynomials of degree at most $t$, nonnegative on $\bom=\mathrm{supp}(\phi)$.  

In addition,
$\bigcap_{s=0}^\infty C^{\bphi}_{t,s}\,=\,\mathscr{C}_t(\bom)_+$. 
Indeed if $g\in\,C^{\bphi}_{t,s}$ for all $s$ then by Theorem \ref{th:lasserre-pos}, 
$g\in\mathscr{C}_t(\bom)_+$. Conversely, if $g\in\mathscr{C}_t(\bom)_+$ then
$\int_{\bom}p^2\,g\,d\phi\geq0$, for all $p\in\R[\x]_s$,
that is, $\M_s(g\cdot\bphi)\succeq0$, and as $s$ was arbitrary, $g\in\,C^{\bphi}_{t,s}$ for all $s$.

Notice that Theorem \ref{th:lasserre-pos} is a Nichtnegativstellensatz and applies to
sets with are not necessarily semi-algebraic. However, if on the one hand the set $\bom$ is \emph{not} required to be semi-algebraic, on the other hand
one needs to know the moment sequence $\bphi$ to exploit numerically
the convex cone $C^{\bphi}_{t,s}$. In addition, the set $\bom$ may also be non-compact. It is then enough to take a reference measure $\phi$ on $\bom$ such that $\sup_i\int e^{\vert x_i\vert}d\phi<M$ for some $M>0$; see e.g. \cite{siopt,tams}. In particular, one may then approximate from above the convex cone
$\mathscr{C}_t(\R^n)_+$ (resp. $\mathscr{C}(\R^n_+)_+$) of polynomials nonnegative on the whole $\R^n$  (resp. $\R^n_+$). (Just take $\phi=\exp(-\Vert\x\Vert^2)d\x$ on $\R^n$ (resp. 
$\phi=\exp(-2\sum_i\,x_i)\,d\x$ on $\R^n_+$).

\section{The Moment-SOS hierarchy in polynomial optimization}
\label{mom-sos}
Consider the optimization problem $\P$ in \eqref{def-pb-P}
where $f\in\R[\x]$, $\bom$ is the basic semi-algebraic set described  in \eqref{set-S},
and $f^*$ in \eqref{def-pb-P} is the \emph{global} minimum of $\P$.

\subsection{A Moment-SOS hierarchy of lower bounds}
\label{sec:lower-bounds}
\noindent
{\bf Assumption 1:}
The set $\bom$ in \eqref{set-S} is compact and contained in the Euclidean  ball of radius $\sqrt{R}$. Therefore  
with no loss of generality we may and will assume that the 
quadratic polynomial $\x\mapsto \theta(\x):=R-\Vert\x\Vert^2$ is in $G$.
Technically this implies that the quadratic module $Q(G)$ is Archimedean; see Remark \ref{rem:archimedean}.\\\

For every $g\in\R[\x]$, let $t_g:=\lceil\mathrm{deg}(g)/2\rceil$. Define $t_0:=\max[t_f,\max_{g\in G}t_g]$, and consider the sequence
of semidefinite programs indexed by $t\in\N$:
\begin{equation}
\label{relax-primal}
  \rho_t\,=\,\displaystyle\inf_{\bphi\in\R^{s(2t)}}\:\{\,\phi(f):\: \phi(1)\,=\,1\,;\:\M_{t-t_g}(g\cdot\bphi)\,\succeq\,0\,,\:\forall g\in G\,\}\,,\quad t\geq t_0\,.
 \end{equation}
  For each $t\geq t_0$, \eqref{relax-primal} is a semidefinite program and a convex relaxation 
  of \eqref{def-pb-P} so that $\rho_t\leq f^*$ for all $t\geq t_0$. In addition, the sequence $(\rho_t)_{t\geq t_0}$ is monotone non decreasing.  The dual of \eqref{relax-primal} reads:
\begin{equation}
\label{relax-dual}
  \rho^*_t\,=\,\displaystyle\sup_{\sigma_g,\lambda}\:\{\,\lambda:\:f-\lambda\,=\,\displaystyle\sum_{g\in G}\sigma_g\,g\,;\quad \sigma_g\,\in\,\Sigma[\x]_{t-t_g}\,,\forall g\in G\,\}\,.
 \end{equation}
  By weak duality between \eqref{relax-primal} and \eqref{relax-dual}, $\rho^*_t\leq\,\rho_t$  for all $t\geq t_0$ and in fact, under Assumption 1,   
  there is no duality gap, i.e., $\rho^*_t=\rho_t$ for all $t\geq t_0$; see e.g. \cite{book-1,CUP}.
 \vspace{0.2cm}
 
\noindent
{\bf KKT-optimality conditions.} In the context of problem $\P$ in \eqref{def-pb-P} with feasible set $\bom$ as in \eqref{set-S}, 
for $\x\in \bom$, let $J(\x):=\{g\in G: g(\x)=0\}$ identify the set of constraints that are 
\emph{active} at $\x$. Let $\x^*\in\bom$, and define
\begin{equation}
\label{CQ}
\mathrm{CQ}(\x^*):\quad \mbox{the vectors $(\nabla g(\x^*))_{g\in J(\x^*)}$ are 
linearly independent.}
\end{equation}
In non linear programming, the celebrated first-order necessary Karush-Kuhn-Tucker (KKT) optimality conditions state that
if $\x^*\in\bom$ is a local minimizer for $\P$ and $\mathrm{CQ}(\x^*)$ holds, then there exists $\blambda^*=(\lambda^*_g)_{g\in G}\subset\R_+$ such that 
\[\nabla f(\x^*)-\sum_{g\in G} \lambda^*_g\,\nabla g(\x^*)\,=\,0\,;\quad \lambda^*_g\,g(\x^*)\,=\,0\,,\: \forall\,g\in G\,.\]
In addition if $\lambda^*_g>0$ whenever $g(\x^*)=0$, then \emph{strict complementarity}
is said to hold. Finally,  the second-order sufficient optimality condition holds at $\x^*$ if
\[\boldmath{\u}^T\left(\nabla^2f(\x^*)-\sum_{g\in G}\lambda^*_g\,\nabla^2 g(\x^*)\,\right)\boldmath{\u}\,>\,0\,,\quad\forall \boldmath{\u}\,(\neq0)\,\in \nabla G(\x^*)^\perp\,,\]
where $\nabla G(\x^*)^\perp:=\{\boldmath{\u}\in\R^n: \boldmath{\u}^T\nabla g(\x^*)\,=\,0\,,\:\forall g\in J(\x^*)\,\}$, and $\nabla^2 h(\x^*)$ denotes the Hessian of $h$ evaluated at $\x^*$.

 \begin{theorem}
  \label{th:mom-sos}
  Let Assumption 1 hold with $\bom$ as in \eqref{set-S}, and 
  consider
  the semidefinite program \eqref{relax-primal} and its dual \eqref{relax-dual}. 
  
  (i) $\rho^*_t=\rho_t$ for all $t\geq t_0$. Moreover
  \eqref{relax-primal} has an optimal solution $\bphi^*$ for every $t\geq t_0$, and if $\bom$ has a nonnempty interior
  then \eqref{relax-dual} also has an optimal solution $(\sigma^*_g)_{g\in G}$.
  
  (ii) As $t$ increases, $\rho_t\uparrow f^*$  and finite convergence takes place if 
  \eqref{CQ}, strict complementarity, and second-order sufficiency condition, hold at every global minimizer of $\P$ (a condition that holds true generically).
  
  (iii) Let $s:=\max_{g\in G} t_g$. If 
  $\mathrm{rank}(\M_t(\bphi^*))=\mathrm{rank}(\M_{t-s}(\bphi^*))$ for some $t$, then
  $\rho_t=f^*$ (i.e.,  finite convergence takes place) and from $\bphi^*$ one may extract $\mathrm{rank}(\M_{t}(\bphi^*))$ global minimizers
  of $\P$ via a linear algebra subroutine.
  \end{theorem}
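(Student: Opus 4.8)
The plan is to treat the three assertions separately: for (i) I rely on the conic-duality structure of the pair \eqref{relax-primal}--\eqref{relax-dual}, for (ii) on Putinar's Positivstellensatz (Theorem \ref{th:putinar}) together with an optimality-condition argument, and for (iii) on a flat-extension / moment-matrix argument.

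For (i), I would first show that the primal feasible set is compact. Since $\theta(\x)=R-\Vert\x\Vert^2$ lies in $G$ by Assumption 1, feasibility forces $\M_{t-t_\theta}(\theta\cdot\bphi)\succeq0$; reading off its diagonal gives $\phi(\Vert\x\Vert^2\,\x^{2\balpha})\le R\,\phi(\x^{2\balpha})$, so together with $\phi(1)=1$ an induction bounds every diagonal moment $\phi_{2\balpha}$ by a power of $R$, and Cauchy--Schwarz on the PSD matrix $\M_t(\bphi)$ then bounds all off-diagonal entries $\phi_{\balpha+\bbeta}$. Hence the feasible set is a compact spectrahedron and the linear objective $\bphi\mapsto\phi(f)$ attains its infimum $\rho_t$. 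Zero duality gap $\rho^*_t=\rho_t$ follows from the standard conic-duality fact that boundedness of the primal feasible set makes the perturbation value function lower semicontinuous at the origin, so the primal value equals its biconjugate, which is the dual value. Finally, if $\bom$ has nonempty interior, I normalize the restriction to $\bom$ of an absolutely continuous measure to obtain $\bphi$ with $\int g\,p^2\,d\phi>0$ for every nonzero $p$ of the relevant degree, i.e. $\M_{t-t_g}(g\cdot\bphi)\succ0$ for all $g\in G$; this strict (Slater) feasibility forces the dual value to be attained.

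For (ii), monotonicity of $(\rho_t)$ is immediate, since increasing $t$ enlarges the localizing matrices and thus shrinks the primal feasible set, while $\rho_t\le f^*$ follows by testing the Dirac measure at a global minimizer $\x^*\in\bom$. For the limit, fix $\epsilon>0$; then $f-f^*+\epsilon>0$ on $\bom$, so by Theorem \ref{th:putinar}(i) one has $f-f^*+\epsilon=\sum_{g\in G}\sigma_g\,g\in Q(G)$, and for $t$ large enough every $\sigma_g$ has degree $\le 2(t-t_g)$, making $f^*-\epsilon$ dual-feasible; hence $\rho_t=\rho^*_t\ge f^*-\epsilon$, giving $\rho_t\uparrow f^*$. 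Finite convergence is the hard part: here I would use \eqref{CQ}, strict complementarity, and second-order sufficiency at each global minimizer to produce an \emph{exact} degree-bounded certificate $f-f^*=\sum_{g\in G}\sigma_g\,g$. Concretely, these conditions translate into Marshall's boundary Hessian conditions, which guarantee membership $f-f^*\in Q(G)$ with controlled degrees, so that $f^*$ itself becomes dual-feasible at some finite $t$ and $\rho^*_t=f^*$. Establishing this membership --- the core of Nie's finite-convergence theorem --- is the main obstacle: it requires a delicate local analysis near each minimizer, then a partition-of-unity/localization argument to globalize the local SOS data into a single representation in the quadratic module.

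For (iii), the equality $\mathrm{rank}\,\M_t(\bphi^*)=\mathrm{rank}\,\M_{t-s}(\bphi^*)$ with $s=\max_{g\in G}t_g$ is a flatness condition, so by the Curto--Fialkow flat-extension theorem the truncated sequence admits a unique finitely atomic representing measure $\mu^*$ with exactly $r:=\mathrm{rank}\,\M_t(\bphi^*)$ atoms, and the localizing constraints $\M_{t-t_g}(g\cdot\bphi^*)\succeq0$ place every atom in $\bom$. Since $\mu^*$ is then a probability measure on $\bom$ with $\int f\,d\mu^*=\rho_t$ and $f\ge f^*$ on $\bom$, we get $\rho_t\ge f^*$, hence $\rho_t=f^*$ and every atom is a global minimizer. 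The $r$ atoms are finally recovered by the standard numerical linear algebra routine that reads off the common zeros from the kernel and column structure of $\M_t(\bphi^*)$.
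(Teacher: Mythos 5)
The paper states Theorem \ref{th:mom-sos} without proof, deferring to the standard references (\cite{book-1,CUP} for (i)--(ii), \cite{nie} for finite convergence, and the flat-truncation literature for (iii)), so there is no in-paper argument to compare against; your reconstruction follows exactly the route taken in those references. Parts (i), (iii), and the asymptotic half of (ii) are correct and essentially complete: boundedness of the feasible set of \eqref{relax-primal} via the redundant constraint $\theta=R-\Vert\x\Vert^2\in G$ (nonnegativity of the diagonal of $\M_{t-1}(\theta\cdot\bphi)$, induction on $\vert\balpha\vert$, then Cauchy--Schwarz on $\M_t(\bphi)$ for the off-diagonal entries), zero duality gap and primal attainment from compactness of the feasible set, Slater's condition from a measure with positive density on $\bom$ for dual attainment when $\mathrm{int}(\bom)\neq\emptyset$, Putinar's Positivstellensatz (Theorem \ref{th:putinar}) applied to $f-f^*+\epsilon$ for $\rho_t\uparrow f^*$, and the Curto--Fialkow flat-truncation theorem with localizing matrices for (iii). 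Two remarks. First, in (ii) the monotonicity should be phrased via projections: the feasible sets at levels $t$ and $t+1$ live in different spaces, and it is the projection of the level-$(t+1)$ set onto $\R^{s(2t)}$ that is contained in the level-$t$ set. Second, and more substantively, the finite-convergence claim in (ii) is not proved but only reduced to Nie's theorem; you correctly note that it suffices to establish $f-f^*\in Q(G)$ (any single representation has finite degree, so $f^*$ becomes dual-feasible at some finite $t$ --- no uniform degree bound is actually needed, so the phrase ``with controlled degrees'' is superfluous), but the passage from CQ, strict complementarity and second-order sufficiency to the boundary Hessian conditions and then to membership in $Q(G)$ is the entire content of \cite{nie} and remains unproved in your write-up. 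Since the paper itself only cites this result, that level of detail is acceptable here, but it should be presented as an appeal to \cite{nie} rather than as a step you still intend to carry out.
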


  In Theorem \ref{th:mom-sos}(iii), the (\emph{flatness}) condition on the ranks of $\M_t(\bphi^*)$ and
  $\M_{t-s}(\bphi^*)$, also holds generically (e.g. if the second-order sufficiency condition holds at every global minimizer); see e.g. \cite{baldi-1,baldi-2,nie}.
  In the recent work \cite{baldi-2}, the authors have provided the first degree-bound on the SOS weights in Putinar's positivity certificate  $f\in Q(G)$, with a \emph{polynomial} dependence on the degree of $f$  and a constant related to how far is $f$ from having a zero in $\bom$. 
 (The previous known bound of \cite{nie-markus} has an exponential dependence.)
 
 As stated in \eqref{relax-primal}, the standard Moment-SOS hierarchy does not scale well with the dimension. This is because it involves $s(2t)$ moment variables $\phi_{\balpha}$ and semidefinite matrices of size $s(t)$. Fortunately, for large-scale polynomial optimization problems, sparsity and/or symmetries are often encountered and can be exploited to obtain alternative hierarchies with much better scaling properties.  
 The interested reader is referred to the recent book \cite{magron} and the many references therein where various such techniques are described and illustrated. Also in \cite{ctp} are described first-order methods that exploit a \emph{constant trace property} of matrices of the semidefinite program \eqref{relax-primal}; they can provide an alternative to costly interior point methods for solving  large-scale semidefinite relaxations.
 
  \subsection{A Moment-SOS hierarchy of upper bounds}
 \label{sec:upper-bounds}
  In this section we now consider a hierarchy of upper bounds on the global minimum $f^*$ of 
  $\P$ in \eqref{def-pb-P} and where $\bom\subset\R^n$ is a compact set with nonempty interior.
  Let $\mu$ be a probability measure with support $\bom$ and with associated sequence of moments 
  $\bmu=(\mu_{\balpha})_{\balpha\in\N^n}$. Consider the sequence of optimization problems indexed by $t\in\N$:
  \begin{eqnarray}
 \label{upper-primal}
 \tau_t&=&\displaystyle\min_{\sigma\in\Sigma[\x]_t} 
 \{\,\displaystyle\int_{\bom} f\,\sigma\,d\mu:\: \int_{\bom}\sigma\,d\mu\,=\,1\,\}\\
 \label{upper-dual}
 \tau^*_t&=&\displaystyle\sup_{\lambda}\:\{\, \lambda:\: \M_t(f\cdot\bmu)\,\succeq\,\lambda\,\M_t(\bmu)\}
 \end{eqnarray}
 It is straightforward to see that $\tau_t\geq f^*$ for all $t$. Indeed let
 $\sigma\in\Sigma[\x]_t$ be a feasible solution of \eqref{upper-primal}. Then as $f\geq f^*$ for all $\x\in\bom$,
 \[\int_{\bom}f\,\sigma\,d\mu\,\geq\,f^*\,\int_{\bom}\sigma\,d\mu\,=\,f^*\,.\]
 Moreover, $\tau^*_t\leq\tau_t$ for every $t$ because from the definition of the localizing and moment matrices
 associated with $\bmu$ and $f$,
 \[\M_t(f\cdot\bmu)\,\succeq\,\lambda\,\M_t(\bmu)
 \Rightarrow \int_{\bom}f\,\sigma\,d\mu\geq\lambda\,\int_{\bom}\sigma\,d\mu\,,\quad\forall\sigma\in\Sigma[\x]_t\,,\]
 which in turn implies  $\lambda\leq\int_{\bom}f\,\sigma\,d\mu$ for all $\sigma$ feasible in \eqref{upper-primal},
 and therefore $\lambda\leq\tau_t$.

\begin{theorem}[\cite{siopt}]
Let $S\subset\R^n$ be compact with nonempty interior and $\tau_t$ and $\tau^*_t$ be as in  \eqref{upper-primal}
and \eqref{upper-dual} respectively. Then $\tau_t=\tau^*_t$ for every $t$ and $\tau_t\downarrow f^*$ as $t$ increases.
Moreover \eqref{upper-primal} (resp. \eqref{upper-dual})  has an optimal solution $\sigma^*\in\Sigma[\x]_t$
(resp. $\lambda^*$) and $\lambda^*$ is the smallest generalized eigenvalue of the pair of matrices 
$(\M_t(f\cdot\bmu),\M_t(\bmu))$ with associated eigenvector $\sigma^*$.
\end{theorem}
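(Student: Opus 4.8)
The plan is to treat the primal--dual pair \eqref{upper-primal}--\eqref{upper-dual} as a finite-dimensional semidefinite pair solvable by linear algebra, reducing the whole $t$-th level to the generalized eigenvalue problem for the pencil $(\M_t(f\cdot\bmu),\M_t(\bmu))$, and then to argue the convergence $\tau_t\downarrow f^*$ by a separate approximation argument. The first thing I would record is that $\M_t(\bmu)\succ0$. Indeed, if $\v^T\M_t(\bmu)\,\v=0$ then $\int_\bom(\langle\v,\v_t(\x)\rangle)^2\,d\mu=0$, so the polynomial $\x\mapsto\langle\v,\v_t(\x)\rangle$ vanishes $\mu$-a.e.\ and hence on $\supp\mu=\bom$; since $\bom$ has nonempty interior this forces $\v=0$. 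Next I would rewrite the primal in Gram form: every $\sigma\in\Sigma[\x]_t$ can be written $\sigma(\x)=\v_t(\x)^T Q\,\v_t(\x)$ with $Q\succeq0$, and then $\int_\bom\sigma\,d\mu=\mathrm{Tr}(Q\,\M_t(\bmu))$ and $\int_\bom f\,\sigma\,d\mu=\mathrm{Tr}(Q\,\M_t(f\cdot\bmu))$, so that \eqref{upper-primal} is the semidefinite program $\min\{\mathrm{Tr}(Q\,\M_t(f\cdot\bmu)):Q\succeq0,\ \mathrm{Tr}(Q\,\M_t(\bmu))=1\}$.

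For the dual side I would note that, using $\M_t(\bmu)\succ0$, the constraint $\M_t(f\cdot\bmu)\succeq\lambda\,\M_t(\bmu)$ is equivalent to $\lambda\leq (\v^T\M_t(f\cdot\bmu)\,\v)/(\v^T\M_t(\bmu)\,\v)$ for every $\v\neq0$. Hence the supremum in \eqref{upper-dual} equals $\lambda^*:=\min_{\v\neq0}\,(\v^T\M_t(f\cdot\bmu)\,\v)/(\v^T\M_t(\bmu)\,\v)$, which is exactly the smallest generalized eigenvalue of the pencil, attained at an eigenvector $\v^*$; this proves that \eqref{upper-dual} has the optimal solution $\lambda^*$ and identifies it as claimed. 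To obtain $\tau_t=\tau^*_t$ I would exhibit a matching primal point. Put $p^*:=\langle\v^*,\v_t(\cdot)\rangle$ and $\sigma^*:=(p^*)^2/((\v^*)^T\M_t(\bmu)\,\v^*)$. Then $\sigma^*$ is a single square, hence lies in $\Sigma[\x]_t$, satisfies $\int_\bom\sigma^*\,d\mu=1$, and has objective value $((\v^*)^T\M_t(f\cdot\bmu)\,\v^*)/((\v^*)^T\M_t(\bmu)\,\v^*)=\lambda^*$. Combined with the weak duality $\tau^*_t\leq\tau_t$ already established in the text, this forces $\tau_t=\tau^*_t=\lambda^*$, shows that $\sigma^*$ is primal optimal, and identifies the optimal density with the generalized eigenvector $\v^*$.

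It then remains to prove $\tau_t\downarrow f^*$. Monotonicity is immediate because $\Sigma[\x]_t\subset\Sigma[\x]_{t+1}$ only enlarges the feasible set, and $\tau_t\geq f^*$ was shown in the text; so it suffices to produce, for each $\epsilon>0$, a feasible SOS density of some degree with objective at most $f^*+\epsilon$. I would fix a global minimizer $\x^*\in\bom$ and, by continuity of $f$, a radius $\delta>0$ with $f(\x)\leq f^*+\epsilon/2$ on $B:=\{\x:\Vert\x-\x^*\Vert<\delta\}\cap\bom$; then choose a continuous $\psi\geq0$ supported in $B$ with $\psi(\x^*)>0$, so that $c:=\int_\bom\psi\,d\mu>0$ (using $\x^*\in\supp\mu$) and $\int_\bom f\,\psi\,d\mu\leq(f^*+\epsilon/2)\,c$. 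By the Weierstrass theorem I would approximate the continuous function $\sqrt\psi$ uniformly on $\bom$ by a polynomial $p$; then $p^2$ is a single square (hence SOS), and $p^2\to\psi$ uniformly on $\bom$, whence $\int_\bom p^2\,d\mu\to c$ and $\int_\bom f\,p^2\,d\mu\to\int_\bom f\,\psi\,d\mu$. Normalizing $\sigma:=p^2/\int_\bom p^2\,d\mu$ and making the approximation fine enough yields $\int_\bom f\,\sigma\,d\mu\leq f^*+\epsilon$; since $\tau_t$ is non-increasing, $\tau_t\leq f^*+\epsilon$ for all $t\geq\deg p$, and letting $\epsilon\downarrow0$ gives $\tau_t\downarrow f^*$.

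The eigenvalue and duality part is essentially bookkeeping once $\M_t(\bmu)\succ0$ is available, so I expect the genuine obstacle to be the convergence step: one must concentrate a \emph{square}-density near the (possibly boundary) minimizer $\x^*$ while controlling the contribution of $f\,p^2$ away from $B$. The inclusion $\x^*\in\supp\mu$ is what guarantees $c>0$, and the uniform control $p^2\to\psi$ is what keeps the tail $\int_{\bom\setminus B}f\,p^2\,d\mu$ arbitrarily small, since $\psi\equiv0$ there and $f$ is bounded on the compact set $\bom$; this is the estimate I would write out in full detail.
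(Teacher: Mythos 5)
Your proof is correct, and the duality/eigenvalue part (positive definiteness of $\M_t(\bmu)$ from $\supp\mu=\bom$ having nonempty interior, the Rayleigh-quotient characterization of the dual, and the matching primal point given by the normalized square of the eigenvector polynomial) is essentially the same linear-algebra argument as in the cited source; it is a nice feature of your write-up that it makes explicit that the optimal SOS density can always be taken to be a \emph{single} square. Where you genuinely diverge is the convergence step. The paper's route (as it states) goes through Theorem \ref{th:lasserre-pos} on the \emph{dual} side: since $(\tau^*_t)_t$ is non-increasing with limit $\lambda_\infty\geq f^*$, one has $\M_t((f-\lambda_\infty)\cdot\bmu)\succeq0$ for all $t$, whence $f-\lambda_\infty\geq0$ on $\supp\mu=\bom$ and so $\lambda_\infty\leq f^*$. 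Your route works on the \emph{primal} side, concentrating a square density near a global minimizer via a bump function and Weierstrass approximation of $\sqrt{\psi}$. Both are valid and both use $\supp\mu=\bom$ in an essential way (you to get $c>0$, the paper through the hypotheses of Theorem \ref{th:lasserre-pos}). Your argument is more elementary and self-contained — it does not presuppose the Nichtnegativstellensatz, whose own proof in \cite{siopt} rests on a density argument of a similar flavor — and it exposes the mechanism (SOS densities mimicking a Dirac at the minimizer) that underlies the quantitative convergence rates of \cite{deKlerk-1,slot-1,slot-2}; the paper's route is shorter given Theorem \ref{th:lasserre-pos} and explains why that theorem is singled out as the engine of the upper-bound hierarchy. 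The only cosmetic point: take the bump $\psi$ supported in the open ball $\{\Vert\x-\x^*\Vert<\delta\}$ of $\R^n$ (e.g. $\psi(\x)=\max(0,\delta^2-\Vert\x-\x^*\Vert^2)$) rather than in the relatively open set $B$, so that $\sqrt\psi$ is continuous on all of $\bom$ and Weierstrass applies directly.
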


  The proof of the convergence $\tau_t\downarrow f^*$ as $t$ increases, is based on Theorem \ref{th:lasserre-pos}.
 The dual problem \eqref{upper-dual} has a single variable $\lambda$ and is a generalized eigenvalue problem associated with the pair of matrices $(\M_t(f\cdot\bmu),\M_t(\bmu))$. Therefore $\tau_t$ can be 
 computed by standard linear algebra routine with no optimization. See e.g. the discussion in \cite[Section 4]{siopt}. However the size of the involved matrices makes this technique quite difficult even for modest size problems. Nevertheless and fortunately, there is a variant \cite{univariate} that reduces to computing generalized eigenvalues of related \emph{univariate} Hankel moment matrices by using the pushforward (univariate) measure $\#\mu$
 (on the real line) of $\mu$ by $f$. That is,
 $\#\mu(B)=\mu(f^{-1}(B))$ for all $B\in\mathcal{B}(\R)$,
 and therefore
  \[f^*\,=\,\inf\,\{\,z\::\: z\,\in\,f(\bom)\,\}\,=\,\inf\,\{\,z\::\: z\,\in\,\mathrm{supp}(\#\mu)\,\}\,.\]
 Then letting $\H_t(\#\mu)$ (resp. $\H_t(z\cdot\#\mu)$) be the (univariate) Hankel moment matrix associated with $\#\mu$ (resp. $z\cdot\#\mu$), the sequence of scalars $(\delta_t)_{t\in\N}$ defined by
 \begin{equation}
 \label{univariate}
 \delta_t\,:=\,\displaystyle\sup_{\lambda}\{\,\lambda:\: \H_t(z\cdot \#\mu)\succeq\,\lambda\,\H_t(\#\mu)\,\}\,,\quad t\in\N\,,\end{equation}
 provides a monotone non-increasing sequence of upper bounds $(\delta_t)_{t\in\N}$ that converges to 
 $f^*$.  For more details, the interested reader is referred to \cite{univariate, slot-2}.
 
  When comparing  \eqref{univariate} with \eqref{upper-dual}, the gain in the computational burden 
 is striking. Indeed in \eqref{univariate} one has to compute generalized eigenvalues of 
 Hankel matrices of size $t+1$ instead of size ${n+t\choose t}$ in \eqref{upper-dual}. 
 Recent works in \cite{deKlerk-1,slot-1,slot-2,slot-3} have proven nice rates for the 
 convergence $\delta_t\downarrow f^*$  and $\tau_t\downarrow f^*$, 
 with an appropriate choice of the reference measure $\mu$ on 
 specific sets $\bom$ (e.g., sphere, box, simplex, etc.).
 Interestingly, the analysis makes use of sophisticated results about zeros of 
 orthogonal polynomials, and a clever perturbation of the Christoffel-Darboux kernel.
 
\section{The Christoffel-Darboux kernel and Christoffel functions}
In this section we briefly review basic properties of the Christoffel-Darboux (CD) kernel and Christoffel functions.
For more details on these classical tools,  the interested reader is referred to e.g. \cite{book, adv-comp} and the many references therein.

\subsection{Christoffel-Darboux kernel}
Let $\bom\subset\R^n$ be compact  with nonempty interior and let $\mu\in\mathscr{M}(\bom)_+$ 
be such that $\M_t(\mu)\succ0$ for all $t\in\N$.
 Let $(P_{\balpha})_{\balpha\in\N^n}\subset\R[\x]$ be a family of polynomials that are orthonormal with respect to $\mu$, 
 and view $\R[\x]_t$ as a finite-dimensional vector subspace 
 of the Hilbert space $L^2(\bom,\mu)$. Then 
  the kernel
   \begin{equation}
 \label{def-cd-kernel}
  (\x,\z)\mapsto K^\mu_t(\x,\z)\,:=\,\sum_{\balpha\in\N^n_t}P_{\balpha}(\x)\,P_{\balpha}(\z)\,,\quad \forall \x,\z\in\R^n\,,\:t\in\N\,,
  \end{equation}
  \noindent
 is called the Christoffel-Darboux (CD) kernel associated with $\mu$. It has an important property,
 namely it \emph{reproduces} $\R[\x]_t$. Indeed, for every $p\,\in\,\R[\x]_t$,
\begin{equation}
 \label{eq:reproducing}
 p(\x)\,=\,\int_{\bom}K^\mu_t(\x,\z)\,p(\z)\,d\mu(\z)\,,\quad\forall \x\in\R^n\,,
\end{equation}
 and for this reason, $(\R[\x]_t,K^\mu_t)$  is called a Reproducing Kernel Hilbert Space (RKHS). Then every $f\in L^2(\bom,\mu)$ can be approximated by a sequence of polynomials $(\hat{f}_t)_{t\in\N}$, where $\hat{f}_t\in\R[\x]_t$ for every $t\in\N$, and
 \[\x\mapsto \hat{f}_t(\x)\,:=\,\int_{\bom}f(\z)\,K^\mu_t(\x,\z)\,d\mu(\z)\,=\,\sum_{\balpha\in\N^n_t}
 \left(\int_{\bom}f(\z)\,P_{\balpha}(\z)\,d\mu(\z)\right)\,P_{\balpha}(\x)\,,\]
 so that $\Vert f-\hat{f}_t\Vert_{L^2(\bom,\mu)}\to 0$ as $t$ increases; see e.g. \cite[Section 2, p. 13]{book}.

 \subparagraph{Interpreting the reproducing property}
 Given $\y\in\R^n$ fixed, let $p\in\R[\x]_t$ be the polynomial defined by
 \begin{equation}
 \label{p-reproduce}
 \x\mapsto p(\x)\,:=\,K^\mu_t(\y,\x)\,,\quad\forall \x\in\R^n\,.
 \end{equation}
 Then by the reproducing property \eqref{eq:reproducing}, observe that
 \[\int_{\bom} \x^{\balpha}\,p(\x)\,d\mu(\x)\,=\,\y^{\balpha}\,=\,\int \x^{\balpha}\,\delta_{\{\y\}}(d\x)\,,\quad\forall\balpha\in\N^n_t\,,\]
 that is, viewing $p$ as a \emph{signed} density w.r.t. $\mu$, the signed measure $pd\mu$ on $\bom$,
 mimics the Dirac measure at $\y$,
 as long as only moments of order at most $t$ are concerned. This is illustrated in Figure  \ref{fig:kernel} where
 $\bom=[-1,1]$ and $d\mu=1_{[-1,1]}(x)dx$, $t$ varies between $1$ and $10$, and $y=0,1/2,1$.
 
 \begin{figure}[h]
\includegraphics[scale=.65]{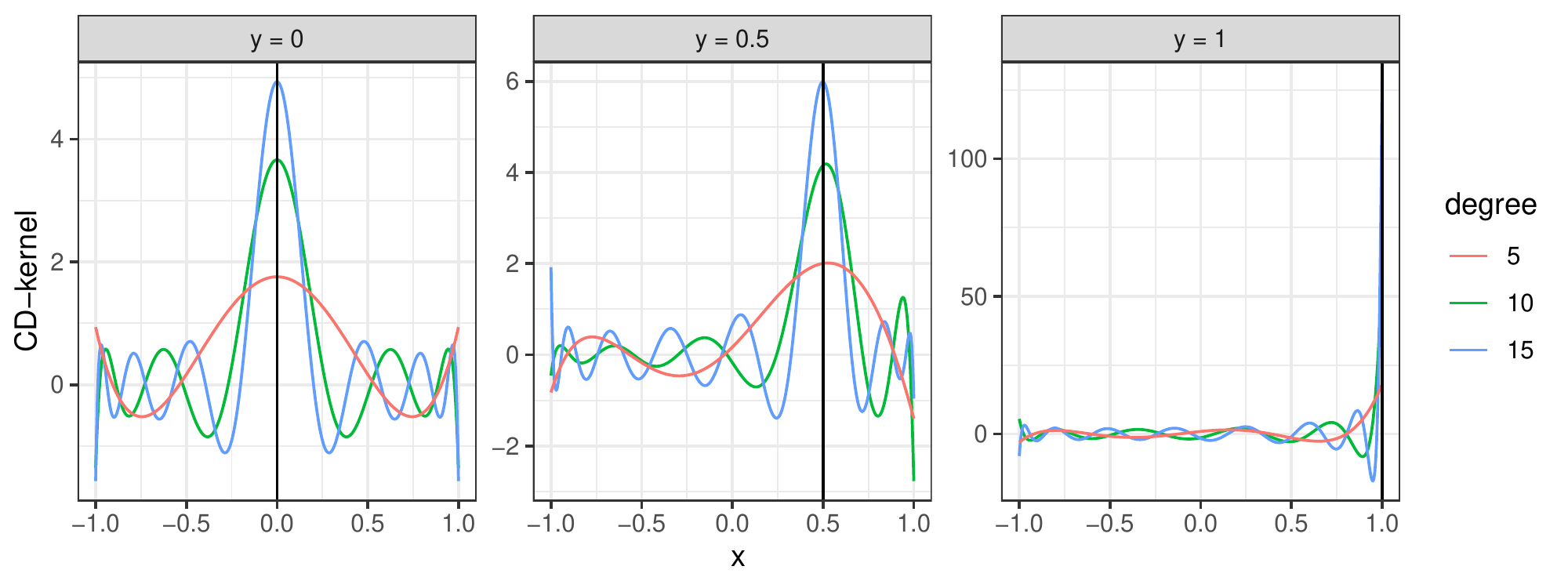}
\caption{The signed measure $K^\mu_t(y,x)dx$ (with different values of $t$) mimics the Dirac measure at 
$y=0$ (left) $y=0.5$ (middle) and $y=1$ (right); reprinted  from \cite[p. 45]{book} with permission.  \copyright Cambridge University Press}
\label{fig:kernel}       
\end{figure}
 
 \subsection{Christoffel function}
 With t$\in\N$, the function $\Lambda^{\mu}_t:\R^n\to\R_+$  associated with $\mu$, and defined by
\begin{equation}
\label{def-christo-1}
\x\mapsto \Lambda^\mu_t(\x)\,:=\,K^{\mu}_t(\x,\x)^{-1}\,=\,\left[\sum_{\balpha\in\N^n_t} P_{\balpha}(\x)^2\right]^{-1}\,,\quad\forall \x\in\R^n\,,\end{equation}
is called the (degree-$t$) Christoffel function (CF), and recalling that $\M_t(\mu)$ is nonsingular, it also turns out that 
\begin{equation}
\label{def-christo-11}
\Lambda^\mu_t(\x)\,=\,\left[\,\v_t(\x)^T\,\M_t(\mu)^{-1}\,\v_t(\x)\,\right]^{-1}\,,\quad\forall \x\in\R^n\,.
\end{equation}
The CF also has an equivalent and variational definition, namely:
\begin{eqnarray}
\label{def-christo-2}
\Lambda^\mu_t(\x)&=&\inf_{p\in\R[\x]_t}\{\,\int_{\bom} p^2\,d\mu\::\: p(\x)\,=\,1\,\}\, ,\quad\forall \x\in\R^n\\
\label{def-christo-3}
&=&\inf_{\p\in\R^{s(t)}}\{\,\langle\p,\M_t(\mu)\,\p\rangle\::\: \langle\p,\v_t(\x)\rangle\,=\,1\,\}\, ,\quad\forall \x\in\R^n\,.
\end{eqnarray}
In \eqref{def-christo-3} the reader can easily recognize a \emph{convex quadratic} programing problem
which can be solved efficiently even for large dimensions. However solving \eqref{def-christo-3} 
only provides the numerical value of 
$\Lambda^\mu_t$ at $\x\in\R^n$, whereas in \eqref{def-christo-11} one obtains the coefficients of the polynomial
$(\Lambda^\mu_t)^{-1}$ (but at the price of inverting $\M_t(\mu)$).

The reader will also notice that from its definitions \eqref{def-christo-1} or \eqref{def-christo-11}, the CF
depends only on the finite sequence $\bmu_{2t}=(\mu_{\balpha})_{\balpha\in\N^n_{2t}}$ of  moments of $\mu$, up to degree $2t$, and not on $\mu$ itself. Indeed there are potentially
many measures on $\bom$  with same moments up to degree $2t$, and therefore indexing $\Lambda^\mu_t$
with $\mu$ is not totally correct; therefore a more correct labelling would be $\Lambda^{\bmu_{2t}}_t$. One reason for this labelling is that in theory of approximation, one is usually given a measure $\mu$ on a compact set $\bom$ and 
one is interested in the sequence $(\Lambda^\mu_t)_{t\in\N}$ and its asymptotic properties.
 \begin{remark}
 In fact, one may also define 
 the CD-kernel $K_t^{\phi}$ and the Christoffel function (CF) $\Lambda^{\phi}_t$ associated with a 
 Riesz linear functional $\phi\in\R[\x]^*$ whose associated sequence $\bphi$ is such that $\M_t(\bphi)\succ0$,  no matter if $\phi$ is
 a measure  on $\bom$ or not. Indeed for fixed $t$,
 and letting $(P_{\balpha})_{\balpha\in\N^n_t}$ be orthonormal w.r.t. $\phi$, the polynomial
 \[(\x,\y)\mapsto K^{\bphi}_t(\x,\y)\,:=\,\sum_{\balpha\in\N^n_t}P_{\balpha}(\x)\,P_{\balpha}(\y)\,,\quad \forall \x,\y\in\R^n\,,\]
 is well-defined, and all definitions \eqref{def-cd-kernel}-\eqref{def-christo-3} are still valid.
 But again, historically  the CD-kernel was defined w.r.t. a given measure $\mu$ on $\bom$ .
 Finally, one may use interchangeably the notations $K^\phi_t$ (resp. $\Lambda^\phi_t$) or 
 $K^{\bphi}_t$ (resp. $\Lambda^{\bphi}_t$), or $K^{\bphi_{2t}}_t$ (resp. $\Lambda^{\bphi_{2t}}_t$) as in all cases, the resulting mathematical objet depends only 
 on the finite  moment sequence $\bphi_{2t}=(\phi_{\balpha})_{\balpha\in\N^n_{2t}}$ of $\bphi$.
 \end{remark}
 \subsection{Some distinguishing properties of the CF}
The CF $\Lambda^{\phi}_t$ associated with a Borel measure $\phi$ on 
a compact $\bom\subset\R^n$,  has an interesting and distinguishing feature.
As $t$ increases, $\Lambda^{\phi}_t(\x)\downarrow 0$ exponentially fast for every 
$\x\not\in\bom$ whereas its decrease is at most polynomial in $t$ whenever $\x\in\bom$; see e.g.
\cite[Section 4.3, p. 50--51]{book}.
 In other words, $\Lambda_t^\phi$ identifies the support of $\phi$ when $t$ is sufficiently large. In addition, at least in dimension $n=2$ or $n=3$, one may visualize this property even for small $t$, as the resulting superlevel sets 
$\bom_\gamma:=\{\,\x: \Lambda^\phi_t(\x)\geq \gamma\,\}$, $\gamma\in\R$, capture the geometric shape of $\bom$ 
quite accurately; For instance in Figure
\ref{fig:cd-shape} are displayed
several level sets
$\bom_\gamma$ associated with the empirical measure $\phi_N$ supported on 
a cloud of $N$ points that approximates 
the geometric shape obtained with the letters ``C" and ``D" of Christoffel and Darboux. In \cite{neurips},
the interested reader can find many other examples 
of $2D$-clouds with non-trivial geometric shapes which are captured quite well with levels set 
$\bom_{\gamma}$ associated with $\phi_N$, even for relatively low degree $t$.
 \begin{figure}[ht]
\includegraphics[scale=.40]{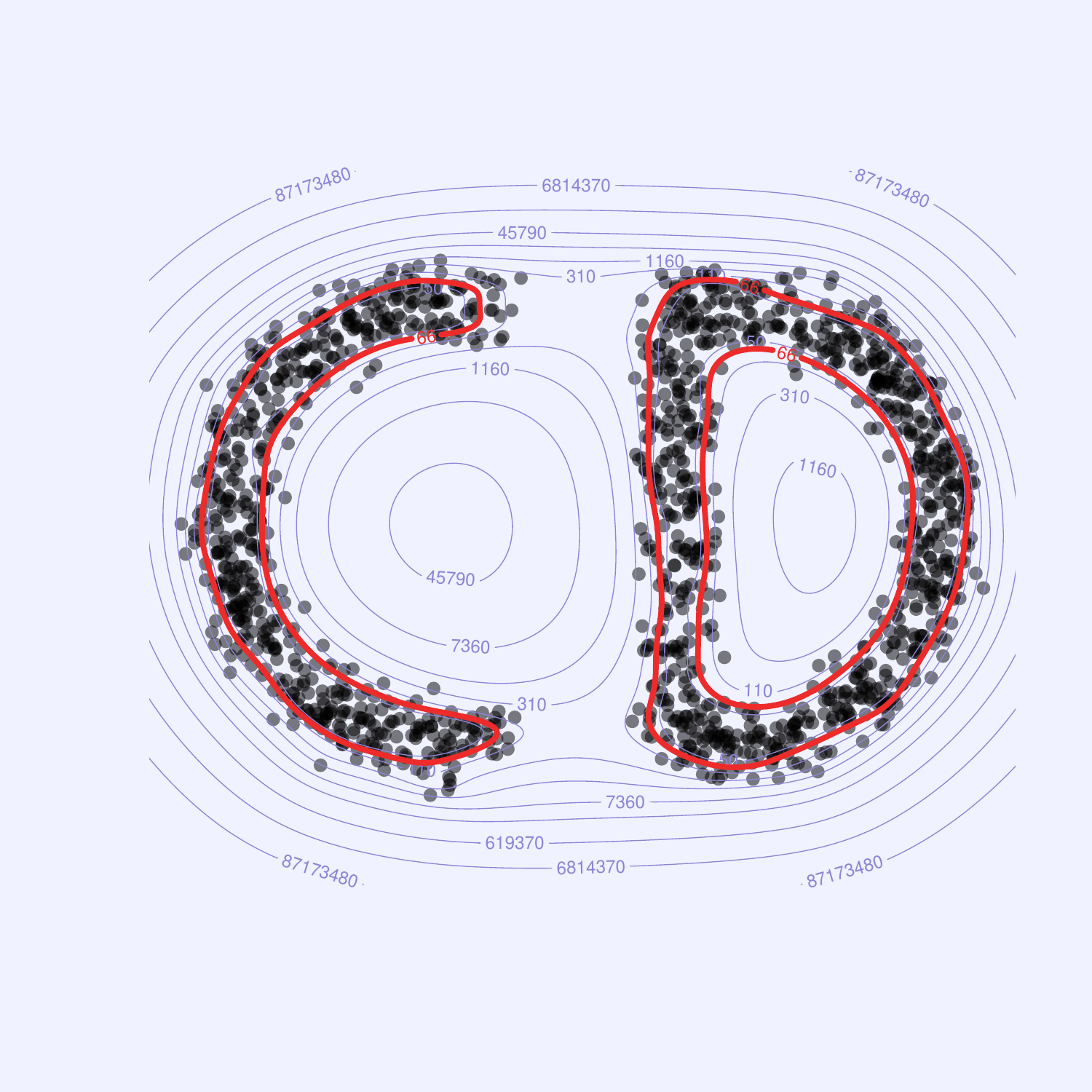}
\caption{Level sets $\bom_\gamma$ associated with $\Lambda^{\phi_N}_{10}$ for various values of $\gamma$; the red level set is obtained with $\gamma:=s(10)$; reprinted  from cover of \cite[p. 45]{book} with permission.  \copyright  Cambridge University Press}
\label{fig:cd-shape}       
\end{figure}

Another nice feature of the CF is its ability to approximate densities. Indeed let $\phi$ and $\mu$ be 
finite Borel measures on a compact set $\bom$, and let
$\mu$ be such that uniformly on compact subsets of $\mathrm{int}(\bom)$,
$\lim_{t\to\infty}s(t)\,\Lambda^\mu_t=h_\mu$, where $h_\mu$ is continuous and positive on $\mathrm{int}(\bom)$ (and recall that $s(t)$ is the dimension of $\R[\x]_t$). In addition suppose that $\phi$ has continuous and positive density 
$f_\phi$  w.r.t. $\mu$. Then uniformly on compact subsets of $\mathrm{int}(\bom)$
\[\lim_{t\to\infty} s(t)\,\Lambda^\phi_t\,=\,f_\phi\cdot h_\mu\,\]
(see e.g. \cite[Theorem 4.4.1]{book}). So if the function $h_\mu$ is already known then one can approximate the density $f_\phi$, uniformly on compact subsets of $\mathrm{int}(\bom)$.

Finally, another distinguishing property of the CF is its link with the so-called \emph{equilibrium measure} of 
the compact set $\bom$. The latter 
is a measure on $\bom$ (let us denote it by $\lambda_{\bom}$) 
which minimizes some Riesz energy functional
(invoking pluripotential theory and viewing $\R^n$ as a subset of $\C^n$). For a detailed treatment see
e.g. \cite{bedford}. The measure $\lambda_{\bom}$ 
is known only for sets with specific geometry (e.g., an interval of the real line, the 
simplex, the unit sphere, the unit euclidean unit box). However under some condition\footnote{The set 
$\bom$ is assumed to be regular and $(\bom,\phi)$ possesses the Bernstein-Markov property; see e.g. \cite[Section 4.4]{book}}, as $t$ increases, the Borel measure $\nu_t$
on $\bom$ with density $1/s(t)\Lambda^\phi_t$ w.r.t. $\phi$, converges to 
$\lambda_{\bom}$  in the weak-$\star$ topology of $\mathscr{M}(\bom)$ 
(the Banach space of finite signed Borel measures on $\bom$ equipped with the total variation norm). That is:
\[\lim_{t\to\infty} \int_{\bom} h\,d\nu_t\,:=\,\lim_{t\to\infty} \int_{\bom}\frac{h}{s(t)\Lambda^\phi_t}\,d\phi
\,=\,\int_{\bom} h\,d\lambda_{\bom}\,,\quad\forall h\in\mathscr{C}(\bom)\,\]
where $\mathscr{C}(\bom)$ is the space of continuous functions on $\bom$; 
(see e.g. \cite[Theorem 4.4.4]{book}). In particular, the moments $\nu_{t,\balpha}$, $\balpha\in\N^n_t$, converge to the moments of $\lambda_{\bom}$.

\section{CF, Optimization, and SOS-Certificates of Positivity}

\subsection{The CF to compare the hierarchies of upper and lower bounds}
Recall the polynomial optimization problem $\P$ in \eqref{def-pb-P} with ${\bom}\subset\R^n$ as in \eqref{set-S}. Let $\mu$ be a finite Borel (reference) measure whose support is exactly ${\bom}$ and with an associated sequence of orthonormal polynomials $(P_\alpha)_{\alpha\in\N^n}$.
Next, with $\bphi\in\R^{s(2t)}$ and from the reproducing property  \eqref{eq:reproducing}, observe that
\begin{eqnarray*}
 \bphi(f)&=&\bphi\left(\int_{\bom} \sum_{\balpha\in\N^n_{2t}}P_{\balpha}(\x)P_{\balpha}(\y)\,f(\y)\,d\mu(\y)\right)\\
 &=& \sum_{\balpha\in\N^n_{2t}}\bphi(P_{\balpha})\,\int_{\bom}P_{\balpha}(\y)\,f(\y)\,d\mu(\y)\\
 &=& \int_{\bom}f(\y)\,\sum_{\balpha\in\N^n_{2t}}\bphi(P_{\balpha})\, P_{\balpha}(\y)\,d\mu(\y)\,=\,\int_{\bom}f(\y)\,\sigma_{\bphi}(\y)\,d\mu(\y)
\end{eqnarray*}
where the degree-$2t$ polynomial $\y\mapsto \sigma_{\bphi}(\y):=\sum_{\balpha\in\N^n_{2t}}\bphi(P_{\balpha})\,P_{\balpha}(\y)$, is a \emph{signed density} w.r.t. $\mu$. 

Therefore in the semidefinite relaxations \eqref{relax-primal} of lower bounds on $f^*$, one searches for a linear functional 
$\phi\in\R[\x]_{2t}^*$ which satisfies
\[\phi(1)\,=\,1\,;\quad \M_t(g\cdot\bphi)\,\succeq\,0\,,\quad\forall g\in G\,,\]
and which minimizes $\phi(f)=\int_{\bom}f\,\sigma_{\bphi}\,d\mu$, where
$\sigma_{\bphi}$ is a degree-$2t$ 
polynomial signed density w.r.t. $\mu$, with coefficients $(\sigma_{\bphi,\balpha}:=\bphi(P_{\balpha}))_{\balpha\in\N^n_{2t}}$.

The reason why the semidefinite relaxations \eqref{relax-primal} can be exact (i.e., $\rho_t=f^*$ for some $t$), is that the signed probability measure $d\nu_t=\sigma_{\bphi}d\mu$ can mimic the Dirac measure at a global minimizer $\bxi\in\bom$ and so $\phi(f)=f(\bxi)$; see Figure \ref{fig:kernel}. 

This is in contrast to the hierarchy of semidefinite relaxations \eqref{upper-primal} of upper bounds where one searches also for a polynomial density $\sigma\,d\mu$ w.r.t. $\mu$, but as this density $\sigma$ is an SOS (hence positive),  it cannot be a  Dirac measure, and therefore the resulting convergence $\tau_t\downarrow f^*$ is necessarily asymptotic and \emph{not} finite. For more details on a comparison between the Moment-SOS hierarchies of upper and lower bounds, the interest reader is referred to \cite{comparison}.

\subsection{The CF and positive polynomials}
Of course, from its definition \eqref{def-christo-11} the reciprocal $(\Lambda^\mu_t)^{-1}$ of the CF 
is an SOS of degree $2t$. But we next reveal an even more interesting link with SOS polynomials.
Observe that the $Q_t(G)$ in \eqref{set-Q_t(G)} is a convex cone and its dual reads

\begin{equation}
\label{cone-dual}
Q_t(G)^*\,=\,\{\,\bphi=(\phi_{\balpha})_{\balpha\in\N^n_{2t}}: \M_{t-t_g}(g\cdot\bphi)\,\succeq\,0\,,\quad \forall\,g\in G\,\}\,.
\end{equation}
\newpage
\noindent
{\bf A duality result of Nesterov.} 

 \begin{lemma}
\label{lem:nesterov}
If $p\in\mathrm{int}(Q_t(G))$ then there exists $\bphi\in\mathrm{int}(Q_t(G)^*)$ such that
\begin{eqnarray}
 \label{nesterov-1}
 p&=&\sum_{g\in G} g(\x)\,\v_{t-t_g}(\x)^T\M_{t-t_g}(g\cdot\bphi)^{-1}\v_{t-t_g}(\x)\,,\quad\forall \x\in\R^n\\
 \label{nesterov-2}
 &=&\sum_{g\in G} g\cdot (\Lambda^{g\cdot\bphi}_t)^{-1}\,.
\end{eqnarray}
In particular, for
every SOS $p\in\mathrm{int}(\Sigma_t[\x])$, $1/p$ is the CF of some linear functional 
$\phi\in\Sigma[\x]^*_{2t}$, i.e., $1/p=\Lambda^{\bphi}_t$ for some
$\bphi\in\R^{s(2t)}$ such that $\M_t(\bphi)\succ0$. In addition, in the univariate case, $\bphi$ has a representing measure on $\R$.
 \end{lemma}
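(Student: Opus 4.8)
The plan is to exhibit $Q_t(G)$ and $Q_t(G)^{*}$ as a dual pair of cones carried by a product of positive semidefinite cones, to equip $Q_t(G)^{*}$ with its canonical logarithmic-determinant barrier, and then to recognise the polynomial on the right-hand side of \eqref{nesterov-1} as \emph{minus} the gradient of that barrier. Nesterov's duality for logarithmically homogeneous barriers then guarantees that this gradient map is onto $\mathrm{int}(Q_t(G))$, which is exactly what produces, for each interior $p$, a sequence $\bphi\in\mathrm{int}(Q_t(G)^{*})$ realising \eqref{nesterov-1}--\eqref{nesterov-2}.

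First I would fix the linear algebra. Writing each SOS weight as $\sigma_g=\v_{t-t_g}^{T}Q_g\,\v_{t-t_g}$ with $Q_g\succeq0$, the cone $Q_t(G)$ is the image of the product $\prod_{g\in G}\mathbb{S}_{+}^{\,s(t-t_g)}$ under the linear map $(Q_g)_{g}\mapsto\sum_{g\in G}g\,\v_{t-t_g}^{T}Q_g\,\v_{t-t_g}$. The adjoint of this map sends a functional $\phi$ to the tuple of localizing matrices $(\M_{t-t_g}(g\cdot\bphi))_{g\in G}$, because $\phi(\sum_g\sigma_g g)=\sum_g\langle\M_{t-t_g}(g\cdot\bphi),Q_g\rangle$. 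Consequently the dual cone is $\{\bphi:\M_{t-t_g}(g\cdot\bphi)\succeq0,\ \forall g\in G\}$, i.e. precisely $Q_t(G)^{*}$ as in \eqref{cone-dual}.

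Next I would place on $\mathrm{int}(Q_t(G)^{*})$ the barrier $F^{*}(\bphi):=-\sum_{g\in G}\log\det\M_{t-t_g}(g\cdot\bphi)$. Two checks are routine: $F^{*}$ is logarithmically homogeneous of degree $\nu=\sum_{g\in G}s(t-t_g)$, and it is non-degenerate (strictly convex), the latter because $g_0=1\in G$ contributes the block $\M_t(\bphi)$, whose entries already recover every moment $\phi_{\balpha}$ with $\vert\balpha\vert\le2t$; hence the linear map above is injective and the Hessian of $F^{*}$ is positive definite. Computing the directional derivative along a perturbation $\bpsi$ and using $\mathrm{tr}(\M_{t-t_g}(g\cdot\bphi)^{-1}\M_{t-t_g}(g\cdot\bpsi))=\psi(g\,\v_{t-t_g}^{T}\M_{t-t_g}(g\cdot\bphi)^{-1}\v_{t-t_g})$, one finds that, under the pairing $\langle\p,\bphi\rangle=\phi(p)$, the element $-\nabla F^{*}(\bphi)$ is exactly the polynomial on the right of \eqref{nesterov-1}, namely $\sum_{g\in G}g\cdot(\Lambda^{g\cdot\bphi}_{t})^{-1}$. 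Nesterov's theorem now asserts that $-\nabla F^{*}$ is a bijection from $\mathrm{int}(Q_t(G)^{*})$ onto $\mathrm{int}((Q_t(G)^{*})^{*})=\mathrm{int}(Q_t(G))$ (using that the bidual of the closed convex cone $Q_t(G)$ is its closure, which has the same interior). Thus every $p\in\mathrm{int}(Q_t(G))$ equals $-\nabla F^{*}(\bphi)$ for a unique $\bphi\in\mathrm{int}(Q_t(G)^{*})$, giving \eqref{nesterov-1}--\eqref{nesterov-2}; taking $G=\{1\}$ specialises this to $1/p=\Lambda^{\bphi}_{t}$ with $\M_t(\bphi)\succ0$.

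For the final univariate claim, $\M_t(\bphi)\succ0$ is a positive definite Hankel matrix, so the classical (truncated Hamburger) moment problem supplies a representing measure on $\R$ --- concretely a finitely atomic one obtained from Gauss--Jacobi quadrature --- and this is where $n=1$ is genuinely used, since in several variables $\M_t(\bphi)\succ0$ need not guarantee a representing measure. The main obstacle is the surjectivity of $-\nabla F^{*}$ onto all of $\mathrm{int}(Q_t(G))$: the gradient formula and the injectivity are elementary, but the claim that \emph{every} interior point is attained is precisely the content of Nesterov's Legendre-duality theorem for logarithmically homogeneous self-concordant barriers, so the real work is verifying its hypotheses --- non-degeneracy of $F^{*}$ (handled by the $g_0=1$ block) and the barrier property of $F^{*}$ at the boundary of $Q_t(G)^{*}$.
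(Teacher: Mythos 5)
Your proposal is correct and follows exactly the route the paper takes: the paper gives no self-contained proof but cites Nesterov's log-det barrier duality for \eqref{nesterov-1} and reads off \eqref{nesterov-2} from the definition \eqref{def-christo-11} of the Christoffel function, which is precisely the argument you reconstruct (dual pair of cones, barrier $-\sum_g\log\det\M_{t-t_g}(g\cdot\bphi)$, identification of $-\nabla F^*$ with the right-hand side, Legendre-type surjectivity onto $\mathrm{int}(Q_t(G))$, and Gauss quadrature for the univariate Hankel case). The only step you still take as a black box --- surjectivity of the gradient map of the logarithmically homogeneous barrier --- is exactly the content of the result the paper cites, so nothing is missing relative to the paper's own treatment.
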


Equation \eqref{nesterov-1} is from \cite{nesterov} while its interpretation
\eqref{nesterov-2} is from \cite[Lemma 4]{cras-1}. 
Observe that \eqref{nesterov-2} provides  a distinguished representation of $p\in\mathrm{int}(Q_t(G))$,
and in view of its specific form, we propose to name \eqref{nesterov-2} the 
\emph{Christoffel representation} of $p\in\mathrm{int}(Q_t(G))$, that is:

\begin{equation}
 \label{eq:Chris-rep-Q(G)}
 \mathrm{int}(Q_t(G))\,=\,\{\,\sum_{g\in G} g\cdot (\Lambda^{g\cdot\bphi}_t)^{-1}\::\: \bphi\,\in\,\mathrm{int}(Q_t(G)^*)\,\}\,.
\end{equation}

Of course, 
an intriguing question is: \emph{What is the link between 
$\bphi\in \mathrm{int}(Q_t(G)^*)$ in \eqref{nesterov-2} and the polynomial
$p\in\mathrm{int}(Q_t(G))$?} A partial answer is provided in Section \ref{CF-equi}.\\

\noindent
{\bf A numerical procedure to obtain the Christoffel representation.} 
Consider the following optimization problems:
\begin{equation}
\label{eq:primal}
\begin{array}{rl}
\mathbf{P}:\quad \displaystyle\inf_{\bphi\in\R^{s(2t)}}&\{\,-\displaystyle\sum_{g\in G}\log\mathrm{det}(\M_{t-t_g}(g\cdot\bphi)):\:
\phi(p)=\displaystyle\sum_{g\in G}s(t-t_g)\\
&\M_{t-t_g}(g\cdot\bphi)\,\succeq\,0\,,\quad \forall g\in G\,\}\,.
\end{array}
\end{equation}
\begin{equation}
\label{eq:dual}
\begin{array}{rl}
\mathbf{P}^*:\quad \displaystyle\sup_{\Q_g\succeq0}&\{\,\displaystyle\sum_{g\in G}\log\mathrm{det}(\Q_g):\\
&p(\x)\,=\,\displaystyle\sum_{g\in G} g(\x)\,\v_{t-t_g}(\x)^T\Q_g\v_{t-t_g}(\x)\,,\quad\forall \x\in\R^n\,\}\,.
\end{array}
\end{equation}
Both $\P$ and $\P^*$ are convex optimization problems that can be solved by off-the-shelf software packages like e.g. CVX \cite{cvx}.

\begin{theorem}
\label{th:primal-dual}
Let $p\in\mathrm{int}(Q_t(G))$.  Then $\P^*$ is a dual of $\P$, that is, for every feasible solution 
$\bphi\in\R^{s(2t)}$ of \eqref{eq:primal} and $(\Q_g)_{g\in G}$ of \eqref{eq:dual},
\begin{equation}
\label{th:primal-dual-weak}
\displaystyle\sum_{g\in G}\log\mathrm{det}(\Q_g)\,\leq\,
-\displaystyle\sum_{g\in G}\log\mathrm{det}(\M_{t-t_g}(g\cdot\bphi))\,.\end{equation}
Moreover,
both $\P$ and $\P^*$ have a unique optimal solution $\bphi^*$ and $(\Q^*_g)_{g\in G}$ respectively, 
which satisfy
\begin{equation}
\label{th:primal-dual-1}
\Q^*_g\,=\,\M_{t-t_g}(g\cdot\bphi^*)^{-1}\,,\quad\forall g\in G\,,
\end{equation}
and which yields equality in \eqref{th:primal-dual-weak}.
 \end{theorem}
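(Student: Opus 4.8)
The plan is to tie the two objectives together through a single trace identity. First I would establish the weak duality inequality \eqref{th:primal-dual-weak}. Given $\bphi$ feasible for \eqref{eq:primal} and $(\Q_g)_{g\in G}$ feasible for \eqref{eq:dual}, I would apply the Riesz functional $\phi$ to the polynomial identity $p(\x)=\sum_{g\in G}g(\x)\,\v_{t-t_g}(\x)^T\Q_g\v_{t-t_g}(\x)$. Since $\phi(g\,\x^{\balpha+\bbeta})=\M_{t-t_g}(g\cdot\bphi)(\balpha,\bbeta)$, this yields $\phi(p)=\sum_{g\in G}\mathrm{tr}(\Q_g\,\M_{t-t_g}(g\cdot\bphi))$, and the constraint $\phi(p)=\sum_{g\in G}s(t-t_g)$ of \eqref{eq:primal} gives the key identity $\sum_{g\in G}\mathrm{tr}(\Q_g\,\M_{t-t_g}(g\cdot\bphi))=\sum_{g\in G}s(t-t_g)$. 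Writing $\M_g:=\M_{t-t_g}(g\cdot\bphi)$ and $d_g:=s(t-t_g)$, I may assume $\M_g\succ0$ and $\Q_g\succ0$ (otherwise \eqref{th:primal-dual-weak} is trivial). Applying the arithmetic--geometric mean inequality to the positive eigenvalues of $\M_g^{1/2}\Q_g\M_g^{1/2}$ gives $\mathrm{det}(\Q_g)\,\mathrm{det}(\M_g)\leq(\mathrm{tr}(\Q_g\M_g)/d_g)^{d_g}$; taking logarithms, summing over $g$, and invoking concavity of $\log$ (Jensen's inequality) with weights $d_g$ together with the trace identity yields $\sum_{g\in G}[\log\mathrm{det}(\Q_g)+\log\mathrm{det}(\M_g)]\leq0$, which is exactly \eqref{th:primal-dual-weak}.

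Next I would settle attainment and the absence of a duality gap by exhibiting a feasible pair for which every inequality above is an equality. Since $p\in\mathrm{int}(Q_t(G))$, Lemma \ref{lem:nesterov} supplies $\bphi^0\in\mathrm{int}(Q_t(G)^*)$ with $p=\sum_{g\in G}g(\x)\,\v_{t-t_g}(\x)^T\M_{t-t_g}(g\cdot\bphi^0)^{-1}\v_{t-t_g}(\x)$. Setting $\Q_g^0:=\M_{t-t_g}(g\cdot\bphi^0)^{-1}\succ0$ makes $(\Q_g^0)_{g\in G}$ feasible for \eqref{eq:dual}; and applying $\phi^0$ to this representation gives $\phi^0(p)=\sum_{g\in G}\mathrm{tr}(I)=\sum_{g\in G}d_g$, so $\bphi^0$ is feasible for \eqref{eq:primal}. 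For this pair $\mathrm{tr}(\Q_g^0\M_g)=d_g$ for every $g$, so both the per-block AM--GM and the Jensen step hold with equality; hence \eqref{th:primal-dual-weak} is tight for $(\bphi^0,(\Q_g^0))$. Weak duality then forces this pair to be optimal for both \eqref{eq:primal} and \eqref{eq:dual}, with coinciding optimal values.

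Finally I would extract uniqueness and the explicit relation \eqref{th:primal-dual-1} from the equality conditions. Because $g_0=1\in G$, the block $\M_t(\bphi)$ already records all moments $\phi_{\bgamma}$, $\vert\bgamma\vert\leq2t$, so the map $\bphi\mapsto(\M_{t-t_g}(g\cdot\bphi))_{g\in G}$ is injective; combined with strict convexity of $\M\mapsto-\log\mathrm{det}(\M)$ on the positive-definite cone, the objective of \eqref{eq:primal} is strictly convex in $\bphi$ on its domain, giving a unique minimizer $\bphi^*$. For any optimal $(\Q_g^*)$ of \eqref{eq:dual}, pairing with $\bphi^*$ makes \eqref{th:primal-dual-weak} an equality, so the AM--GM equality case forces $\Q_g^*=c_g\,\M_{t-t_g}(g\cdot\bphi^*)^{-1}$ and the Jensen equality case, via the trace identity, forces every $c_g=1$, i.e. \eqref{th:primal-dual-1}; this pins down $(\Q_g^*)$ uniquely. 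The main obstacle is the passage from weak to strong duality: rather than verifying a constraint qualification and arguing attainment through coercivity of the log-det barrier, the cleanest route is to use Lemma \ref{lem:nesterov} to produce, ready-made, the tight primal--dual pair, after which the equality analysis of the matrix AM--GM and Jensen inequalities delivers both uniqueness and the stated correspondence.
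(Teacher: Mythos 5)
Your proof is correct, and its skeleton is the one the paper intends (the paper omits the proof but states that it uses Lemma \ref{lem-logdet} and mimics the constant-$p$ case of the cited reference): pair primal- and dual-feasible points through the trace identity $\phi(p)=\sum_{g\in G}\mathrm{tr}\bigl(\Q_g\,\M_{t-t_g}(g\cdot\bphi)\bigr)$, invoke Lemma \ref{lem:nesterov} to manufacture a tight pair, and deduce uniqueness from strict convexity of the log-det barrier (via $g_0=1$ and injectivity of $\bphi\mapsto\M_t(\bphi)$) together with the equality analysis. The one place you genuinely deviate is the inequality driving weak duality. The paper's tool is the Fenchel inequality of Lemma \ref{lem-logdet}, namely $s(t-t_g)+\log\mathrm{det}(\M_{t-t_g}(g\cdot\bphi))+\log\mathrm{det}(\Q_g)\leq\mathrm{tr}\bigl(\M_{t-t_g}(g\cdot\bphi)\,\Q_g\bigr)$ with equality if and only if $\Q_g=\M_{t-t_g}(g\cdot\bphi)^{-1}$; summing over $g$ and using the normalization $\phi(p)=\sum_{g\in G}s(t-t_g)$ gives \eqref{th:primal-dual-weak} in one step, and the equality case hands you \eqref{th:primal-dual-1} block by block with no scaling ambiguity. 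Your AM--GM bound characterizes equality only up to $\Q_g=c_g\,\M_{t-t_g}(g\cdot\bphi)^{-1}$, so you are forced to couple the blocks through the Jensen step and the trace identity in order to conclude $c_g=1$. This is sound, and it is really the same eigenvalue inequality in a weaker per-block form ($1+\log x\leq x$ applied to the eigenvalues of $\M^{1/2}\Q\M^{1/2}$, versus their AM--GM); the Fenchel form simply buys the equality case, and hence \eqref{th:primal-dual-1} and dual uniqueness, without the extra normalization argument. Everything else in your write-up --- the verification that the $\bphi$ supplied by \eqref{nesterov-1} is primal-feasible with $\Q_g=\M_{t-t_g}(g\cdot\bphi)^{-1}$ dual-feasible and tight, and the attainment and uniqueness conclusions --- matches the intended proof.
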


The proof which uses Lemma \ref{lem-logdet} in Appendix, mimics that of \cite[Theorem 3]{cras-1} (where $p$ was a constant polynomial) and is omitted. 

\subsection{A disintegration of the CF}
We next see how the above duality result, i.e., the Christoffel representation \eqref{nesterov-2} of $\mathrm{int}(Q_t(G))$, can be used to in turn infer a \emph{disintegration property} 
of the CF. So let $\Lambda^\mu_t(\x,y)$ be the CF of a Borel probability measure 
$\mu$ on $\bom\times Y$, where $\bom\subset\R^n$ and $Y\subset\R$ are compact.
It is well-known that $\mu$ disintegrates into its marginal probability $\phi$ on $\bom$, and 
a conditional measure $\hat{\mu}(dy\vert \x)$ on $Y$, given $\x\in\bom$, that is,
\[\mu(A\times B)\,=\,\int_{\bom\cap A}\hat{\mu}(B\vert\x)\,\phi(d\x)\,,\quad\forall A\in\mathcal{B}(\bom)\,,\,B\in\mathcal{B}(Y)\,.\]

\begin{theorem}[\cite{cras-1}]
\label{th-main}
Let $\bom\subset\R^n$ (resp. $Y\subset\R$) be compact with nonempty interior, and let
$\mu$ be a Borel probability measure on $\bom\times Y$, with marginal $\phi$ on $\bom$.
  Then for every $t\in\N$, and $\x\in\bom$, there exists a probability measure $\nu_{\x,t}$ on $\R$ such that
  \begin{equation}
  \label{th-main-1}
 \Lambda^\mu_t(\x,y)\,=\,\Lambda^\phi_t(\x)\cdot\Lambda^{\nu_{\x,t}}_t(y)\,,\quad\forall \x\in\R^n\,,\:y\in\R\,.
\end{equation}
 \end{theorem}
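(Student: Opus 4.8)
The plan is to fix $\x\in\bom$ and reduce the statement to a \emph{one-dimensional} instance of Nesterov's duality (Lemma \ref{lem:nesterov}), applied to the restriction to the fibre over $\x$ of the reciprocal Christoffel function of $\mu$. Concretely, I would study the univariate map $y\mapsto K^\mu_t((\x,y),(\x,y))=\v_t(\x,y)^{T}\M_t(\mu)^{-1}\v_t(\x,y)$, where $\x$ is now frozen and only $y$ varies. The one-dimensionality of $Y$ is exactly what makes the relevant (representing-measure) clause of Lemma \ref{lem:nesterov} available, which is presumably why the theorem is stated with $Y\subset\R$.

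First I would record that this map is a univariate polynomial in $y$ of degree exactly $2t$: it is a sum of squares by construction (a positive-definite quadratic form in the monomial vector $\v_t(\x,y)$ read along the fibre), it is strictly positive on all of $\R$ because $\M_t(\mu)\succ0$, and the coefficient of $y^{2t}$ is positive because the monomial $y^{t}$ contributes. Hence, writing $w_\x(y):=\Lambda^\phi_t(\x)\,K^\mu_t((\x,y),(\x,y))$ (a positive scalar multiple), the polynomial $w_\x$ lies in the interior of the cone of univariate SOS polynomials of degree $2t$, i.e. $w_\x\in\mathrm{int}(\Sigma_t[y])$.

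Next I would invoke the univariate clause of Lemma \ref{lem:nesterov}: since $w_\x\in\mathrm{int}(\Sigma_t[y])$, there is a moment sequence $\bnu$ with $\M_t(\bnu)\succ0$ and $1/w_\x=\Lambda^{\bnu}_t$, and---crucially in one variable---$\bnu$ admits a representing measure $\nu_{\x,t}$ on $\R$, so that $1/w_\x=\Lambda^{\nu_{\x,t}}_t$. Unwinding the definition of $w_\x$ gives $\Lambda^{\nu_{\x,t}}_t(y)=1/\big(\Lambda^\phi_t(\x)\,K^\mu_t((\x,y),(\x,y))\big)=\Lambda^\mu_t(\x,y)/\Lambda^\phi_t(\x)$, which is precisely the claimed factorisation $\Lambda^\mu_t(\x,y)=\Lambda^\phi_t(\x)\,\Lambda^{\nu_{\x,t}}_t(y)$. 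As both sides are rational in $\x$ and $\M_t(\mu)\succ0$ on a dense set, the identity would then extend to all $\x\in\bom$ by continuity.

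The step I expect to be the real obstacle is showing that $\nu_{\x,t}$ can be taken to be a genuine \emph{probability} measure, i.e. that its total mass equals $1$. The factorisation only pins down $\Lambda^{\nu_{\x,t}}_t$, equivalently $\M_t(\nu_{\x,t})^{-1}$ on the moment curve, and rescaling the measure rescales its Christoffel function; so the mass is forced rather than free and must be computed to equal $1$. This is where the hypotheses that $\mu$ and $\phi$ are probability measures should enter. I would try to read off the mass as a Schur complement of $\M_t(\mu)^{-1}$ along the fibre and identify it, via the block-inverse identity relating the $\x$-block of $\M_t(\mu)^{-1}$ to $\M_t(\phi)^{-1}$ (the marginal $\phi$ being the $\x$-marginal of $\mu$, so that $\M_t(\phi)$ is the corresponding block of $\M_t(\mu)$), with the normalising constant $\Lambda^\phi_t(\x)$, whence $\int d\nu_{\x,t}=1$. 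Carrying out this normalisation cleanly---rather than by the brute-force determinant manipulations it invites---is in my view the crux of the proof and the genuinely new content beyond the reproducing-kernel bookkeeping of the earlier steps.
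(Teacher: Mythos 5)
Your overall route is the same one the paper attributes to \cite[Theorem 5]{cras-1}: freeze $\x$, observe that the fibre restriction $y\mapsto K^\mu_t((\x,y),(\x,y))$ is a strictly positive univariate polynomial of degree exactly $2t$ (its leading coefficient is the diagonal entry of $\M_t(\mu)^{-1}$ indexed by $y^t$, hence positive), so it lies in $\mathrm{int}(\Sigma[y]_t)$, and then invoke the univariate clause of Lemma \ref{lem:nesterov} to produce a representing measure on $\R$. Up to that point the argument is correct, and you have also correctly isolated where the real work lies, namely in showing that after dividing by $\Lambda^\phi_t(\x)^{-1}$ the representing measure has total mass $1$. (The closing ``extend by continuity in $\x$'' step is unnecessary: nothing degenerates as $\x$ varies, since the leading coefficient in $y$ does not depend on $\x$.)

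The gap is in the route you sketch for that normalisation. The measure $\nu_{\x,t}$ delivered by Lemma \ref{lem:nesterov} is the one whose moment matrix is the inverse of the \emph{distinguished} Gram matrix $\Q$ of $w_\x$ --- the unique Gram matrix whose inverse is Hankel, equivalently the $\log\det$-maximiser of Theorem \ref{th:primal-dual} --- and its mass is $(\Q^{-1})_{00}$. This $\Q$ is in general \emph{not} the naive fibre restriction $A(\x)^T\M_t(\mu)^{-1}A(\x)$ of $\M_t(\mu)^{-1}$ (where $\v_t(\x,y)=A(\x)\v_t(y)$): the two Gram matrices of the same univariate polynomial differ by a matrix $N$ with $\sum_{j+k=m}N_{jk}=0$ for all $m$, and the $(0,0)$ entries of their inverses differ in general once $t\geq 2$ (for $t=1$ the Gram matrix is unique, which is why a direct cofactor computation does verify the claim there). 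Consequently, ``reading off the mass as a Schur complement of $\M_t(\mu)^{-1}$ along the fibre'' computes the mass of the wrong object, and the block-inverse identity relating the $\x$-block of $\M_t(\mu)^{-1}$ to $\M_t(\phi)^{-1}$ only yields the inequality $\Lambda^\mu_t(\x,y)^{-1}\geq\Lambda^\phi_t(\x)^{-1}$, not the required identity $\nu_{\x,t}(\R)=1$. Pinning down the mass of the Nesterov measure --- which is determined by $w_\x$ only through the nonlinear correspondence $\bnu\mapsto \v_t^T\M_t(\bnu)^{-1}\v_t$ --- needs an argument tied to that distinguished representation (as in \cite{cras-1}), not to the raw blocks of $\M_t(\mu)^{-1}$; as written, this step of the proposal would fail.
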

 
The proof in \cite[Theorem 5]{cras-1} heavily relies on the duality result of Lemma \ref{lem:nesterov}.
In particular every degree-$2t$ univariate SOS $p$ in the interior of $\Sigma[y]_t$ is the reciprocal of the Christoffel function of 
some Borel measure on $\R$. 

\subsection{Positive polynomials and equilibrium measure}
\label{CF-equi}
This section is motivated by the following observation. Let $(T_n)_{n\in\N}$ (resp. $(U_n)_{n\in\N}$) be the family of Chebyshev polynomials of the first kind (resp. second kind). They are orthogonal  
w.r.t. measures $(1-x^2)^{-1/2}dx$ and $(1-x^2)^{1/2}dx$ 
on $[-1,1]$, respectively. (The Chebyshev measure $(1-x^2)^{-1/2}dx/\pi$ 
is the \emph{equilibrium measure} of the interval $[-1,1]$.) They also satisfy the identity
\[T_n(x)+(1-x^2)\,U_n(x)\,=\,1\,,\quad\forall x\in \R\,,\quad n=1,\ldots\]
Equivalently, it is said that the triple $(T_n,(1-x^2),U_n)$ is a solution to (polynomial) Pell's equation for every $n\geq1$. For more details on polynomial Pell's equation (originally Pell's equation is a topic in algebraic number theory), the interested reader is referred to \cite{pell-2,pell-1}.
Next, letting $x\mapsto g(x):=(1-x^2)$, and after normalization to pass to orthonormal polynomials, in summing up 
one obtains
\begin{equation}
 \label{pell}
 \Lambda^\phi_t(x)^{-1}+(1-x^2)\,\Lambda^{g\cdot\phi}(x)^{-1}\,=\,2t+1\,,\quad \forall x\in\R\,,\: \forall t=0,1,\ldots
\end{equation}
Now, invoking Lemma \ref{lem:nesterov}, observe that \eqref{pell} also states that $p\in \mathrm{int}(Q_t(G))$ where $G=\{g\}$ and
$p$ is  the constant polynomial $x\mapsto p(x)=2t+1$. In addition, it also means that if one solves $\P$ in \eqref{eq:primal} with $p=s(t)+s(t-t_g)=2t+1$ (recall that $t_g=1$), then its unique optimal solution $\bphi$
is just the vector of moments (up to degree $2t$) of the equilibrium measure $\phi=(1-x^2)^{-1/2}dx/\pi$ of the interval $\bom=[-1,1]$. 
So in Lemma \ref{lem:nesterov} the linear functional $\phi_p$ associated with
the constant polynomial $p=2t+1$ is simply the equilibrium measure of $\bom$ (denote it $\lambda_{\bom}$). 

The notion of equilibrium measure associated to a given set
originates from logarithmic potential theory (working in
$\mathbb{C}$ in the univariate case to minimize some energy functional) and some generalizations have been obtained in the multivariate case 
via pluripotential theory in $\mathbb{C}^n$. 
In particular if $\bom\subset\R^n\subset\mathbb{C}^n$ is compact then 
the equilibrium measure $\lambda_{\bom}$ is equivalent to Lebesgue measure on compact subsets of $\mathrm{int}(\bom)$.
See e.g. 
Bedford and Taylor \cite[Theorem 1.1]{bedford} and \cite[Theorem 1.2]{bedford}. 

\subparagraph{The Bernstein-Markov property} A measure with compact support $\bom$ satisfies the Bernstein-Markov property if there exists a sequence of positive numbers $(M_t)_{t\in\N}$ such that for all $t\in\N$ and all $p\in\R[\x]_t$,
\[\sup_{\x\in\bom}\vert p(\x)\vert\:(=\Vert p\Vert_{\bom})\:\leq\,M_t\cdot \Vert p\Vert_{L^2(\bom,\mu)}\,,\]
and $\lim_{t\to\infty}\log(M_t)/t=0$. 

So when it holds, the Bernstein-Markov property 
describes  how the sup-norm and the $L^2(\bom,\mu)$-norm of polynomials relate when the degree increases.

In \cite{cras-3} we have obtained the following result. Let $\x\mapsto \theta(\x):=1-\Vert\x\Vert^2$ and possibly after an appropriate scaling, 
let $\bom$ in \eqref{set-S} be such that $\theta\in Q_1(G)$ (so that
$\bom\subset [-1,1]^n$); see Remark \ref{rem:archimedean}.

\begin{theorem}[\cite{cras-3}]
\label{th0}
Let $\phi\in\R[\x]^*$ (with $\phi_0=1$) be such that 
$\M_t(g\cdot\bphi)\succ0$ for all $t\in\N$ and all $g\in G$, so that the Christoffel functions $\Lambda^{g\cdot\phi}_t$ are all well defined.
In addition, suppose that there exists $t_0\in\N$ such that
 \begin{equation}
 \label{th0-1}
 \sum_{g\in G_t}s(t-t_g)\,=\,\sum_{g\in G_t} g\cdot(\Lambda^{g\cdot\phi}_{t-t_g})^{-1}\,,\quad\forall t\geq t_0\,.
 \end{equation}
 \indent Then: (a)  for every $t\geq t_0$, the finite moment sequence
 $\bphi^*_t:=(\phi_{\balpha})_{\balpha\in\N^n_{2t}}$ is the unique optimal solution of \eqref{eq:primal} (with $p$ the constant polynomial $\x\mapsto \sum_{g\in G}s(t-t_g)$).
 
  \indent (b) $\phi$ is a Borel measure on $\bom$ and the 
 unique representing measure of $\bphi$. Moreover, if 
 $(\bom,g\cdot\phi)$ satisfies the Bernstein-Markov property for every $g\in G$, then 
 $\phi$ is the equilibrium measure $\lambda_{\bom}$ and  therefore the Christoffel polynomials $(\Lambda^{g\cdot\lambda_{\bom}}_{t})^{-1}_{g\in G_t}$ satisfy the generalized Pell's equations:
  \begin{equation}
 \label{th0-2}
 \sum_{g\in G_t}s(t-t_g)\,=\,\sum_{g\in G_t} g\cdot(\Lambda^{g\cdot\lambda_{\bom}}_{t-t_g})^{-1}\,,\quad\forall t\geq t_0\,.
 \end{equation}
 \end{theorem}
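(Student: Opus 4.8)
The plan is to prove (a) directly from the primal--dual theory of \eqref{eq:primal}--\eqref{eq:dual}, and to prove (b) by combining the resulting optimality with the known weak-$\star$ convergence of (reciprocal) Christoffel measures to the equilibrium measure; throughout, for $t\geq t_0$ the index set $G_t$ coincides with $G$, and I write the sums over $G$.

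For (a), I would first observe that hypothesis \eqref{th0-1} is \emph{exactly} the Christoffel representation \eqref{nesterov-2}--\eqref{eq:Chris-rep-Q(G)} of the constant polynomial $p=\sum_{g\in G}s(t-t_g)$ (with $\Lambda^{g\cdot\bphi}_t$ read as the degree-$(t-t_g)$ Christoffel function of $g\cdot\bphi$). Indeed, applying \eqref{def-christo-11} to $g\cdot\phi$ gives $(\Lambda^{g\cdot\phi}_{t-t_g})^{-1}(\x)=\v_{t-t_g}(\x)^T\M_{t-t_g}(g\cdot\bphi)^{-1}\v_{t-t_g}(\x)$, so \eqref{th0-1} is identical to the matrix form \eqref{nesterov-1}. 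Since $\M_t(g\cdot\bphi)\succ0$ for every $t$, we have $\bphi\in\mathrm{int}(Q_t(G)^*)$, whence $p\in\mathrm{int}(Q_t(G))$ by \eqref{eq:Chris-rep-Q(G)} and Theorem \ref{th:primal-dual} applies. I would then exhibit a matching feasible pair: $\bphi^*_t:=(\phi_{\balpha})_{\balpha\in\N^n_{2t}}$ is primal feasible, because the single linear constraint holds ($p$ is constant and $\phi_0=1$, so $\phi(p)=\sum_{g}s(t-t_g)$) and $\M_{t-t_g}(g\cdot\bphi^*_t)\succ0$; while $\Q_g:=\M_{t-t_g}(g\cdot\bphi^*_t)^{-1}$ is dual feasible \emph{precisely} by the rewritten \eqref{th0-1}. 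Their objective values coincide, so by weak duality \eqref{th:primal-dual-weak} both are optimal, and the uniqueness assertion of Theorem \ref{th:primal-dual} identifies $\bphi^*_t$ as the unique optimizer of \eqref{eq:primal}.

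For (b), existence of a representing measure $\phi$ supported on $\bom$ follows from Putinar's Theorem \ref{th:putinar}(ii) (the hypotheses give $\M_t(g\cdot\bphi)\succeq0$ for all $t,g$, and $Q(G)$ is Archimedean since $\theta\in Q_1(G)$), while uniqueness follows from compactness of $\bom$, as polynomials are dense in $\mathscr{C}(\bom)$ and the moments already determine $\int p\,d\phi$ for all $p$. For the equilibrium identification I would renormalize \eqref{th0-1}: set $C_t:=\sum_{g\in G}s(t-t_g)$ and let $\rho_t$ be the probability measure with $\phi$-density $C_t^{-1}\sum_{g\in G}g\,(\Lambda^{g\cdot\phi}_{t-t_g})^{-1}$. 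By \eqref{th0-1} this density is identically $1$, so $\rho_t=\phi$ for every $t\geq t_0$. On the other hand, writing $d(g\cdot\phi)=g\,d\phi$, one has $\rho_t=\sum_{g\in G}w_{g,t}\,\mu_{g,t}$ with weights $w_{g,t}=s(t-t_g)/C_t$ and $\mu_{g,t}:=\frac{1}{s(t-t_g)}(\Lambda^{g\cdot\phi}_{t-t_g})^{-1}\,d(g\cdot\phi)$.

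The decisive analytic input is that, under the Bernstein--Markov hypothesis on each $(\bom,g\cdot\phi)$ together with regularity of $\bom$, every $\mu_{g,t}$ converges weak-$\star$ to the \emph{same} limit, the equilibrium measure $\lambda_{\bom}$: this is the convergence $\nu_t\to\lambda_{\bom}$ recalled from \cite[Theorem 4.4.4]{book}, applied with reference measure $g\cdot\phi$ in place of $\phi$ (the reindexing $t\mapsto t-t_g$ being harmless), and using that $\lambda_{\bom}$ depends only on the set $\bom$, not on the reference measure. Since $s(t-t_g)/s(t)\to1$, the weights satisfy $w_{g,t}\to 1/(m+1)$ with $\sum_{g}w_{g,t}=1$, so the convex combination $\rho_t$ also converges weak-$\star$ to $\lambda_{\bom}$; combined with $\rho_t\equiv\phi$ this forces $\phi=\lambda_{\bom}$, and substituting $\phi=\lambda_{\bom}$ into \eqref{th0-1} yields the generalized Pell equations \eqref{th0-2}. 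I expect the main obstacle to be the justification of this last limit, namely invoking the weak-$\star$ convergence of Christoffel measures to the equilibrium measure and checking that all $m+1$ limits genuinely coincide; this is imported as a black box from potential theory, and everything else (the feasibility checks, the constancy $\rho_t\equiv\phi$, and the matching of truncation levels $t-t_g$) is routine verification.
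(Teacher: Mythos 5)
Your proposal is correct and follows essentially the route the paper intends: part (a) is exactly the weak-duality/log-det certificate mechanism of Theorem \ref{th:primal-dual} and Lemma \ref{lem-logdet} (hypothesis \eqref{th0-1} being the dual feasibility of $\Q_g=\M_{t-t_g}(g\cdot\bphi)^{-1}$ with matching objective values), and part (b) combines Putinar's Theorem \ref{th:putinar}(ii) with determinacy on compact sets and the weak-$\star$ convergence of the normalized Christoffel measures to $\lambda_{\bom}$ under the Bernstein--Markov property, which is precisely the argument sketched after the theorem for the measures $\mu_t=p^*_t\lambda_{\bom}$. Note the paper itself defers the proof to \cite{cras-3}, but your reconstruction matches that proof's structure, including the harmless technical caveats (regularity of $\bom$, the weights $s(t-t_g)/\sum_g s(t-t_g)$ summing to one) that you correctly flag.
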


Importantly, the representation of $\bom$ in \eqref{set-S} 
depends on the chosen set $G$ of generators, which is not unique. Therefore if \eqref{th0-1} holds for some set $G$, it may not hold for another set $G$. 
The prototype of $\phi$ in Theorem \ref{th0} is the equilibrium measure of $\bom=[-1,1]$, i.e.,
the Chebyshev measure $(1-x^2)^{-1/2}dx/\pi$ on $[-1,1]$.
So Theorem \ref{th0} is a strong result which is likely to hold only for quite specific sets $\bom$ (and provided that a good set of generators is used). In \cite{cras-3} the author could prove that \eqref{th0-1} also holds for the equilibrium measure $\lambda_{\bom}$ of the $2D$-simplex, the $2D$-unit 
unit box, the $2D$-Euclidean unit ball, at least for $t=1,2,3$. 

However, if $\theta\in Q_1(G)$ then as proved in \cite{cras-3,ctp}, $1\in\mathrm{int}(Q_t(G))$ for all $t$, and therefore \eqref{eq:primal} has always a unique optimal solution $\bphi^*_t$. That is, for every $t\geq t_0$, \eqref{th0-1} hold for some $\phi_t\in\R[\x]_{2t}^*$ which \emph{depends on $t$} (whereas in \eqref{th0-1} one considers moments up to degree $2t$ of the \emph{same} $\phi$).
Moreover, every accumulation point $\bphi$ of the sequence $(\bphi^*_t)_{t\in\N}$ has a representing measure $\phi$ on $\bom$. An interesting issue to investigate is the nature of $\phi$, in particular its relationship with the equilibrium measure $\lambda_{\bom}$ of $\bom$.

Finally, for general compact sets $\bom$ with nonempty interior,
to $\lambda_{\bom}$ one may associate
the polynomial
\[p^*_t\,:=\,\frac{1}{\sum_{g\in G_t}s(t-t_g)}\sum_{g\in G_t}g\cdot (\Lambda_t^{g\cdot\lambda_{\bom}})^{-1}\,\]
which is well-defined because the matrices $\M_{t-t_g}(g\cdot\lambda_{\bom})$ are non singular.
In Theorem \ref{th0} 
one has considered cases where $p^*_t$ is exactly the constant (equal to $1$) polynomial
(like for the Chebyshev measure on $\bom=[-1,1]$). We now consider the measures
$(\mu_t:=p^*_t\lambda_{\bom})_{t\in\N}$,
with respective densities $p^*_t$ w.r.t. $\lambda_{\bom}$. Each $\mu_t$ is a probability measure on $\bom$ because
\begin{eqnarray*}
\int p^*_t\,d\lambda_{\bom}&=&
\frac{1}{\sum_{g\in G_t}s(t-t_g)}\sum_{g\in G_t}\int g\cdot (\Lambda_t^{g\cdot\lambda_{\bom}})^{-1}\,d\lambda_{\bom}\\
&=&\frac{1}{\sum_{g\in G_t}s(t-t_g)}\sum_{g\in G_t}\langle \M_{t-t_g}(g\cdot\lambda_{\bom}),\M_{t-t_g}(g\cdot\lambda_{\bom})^{-1}\rangle\\
&=&\frac{1}{\sum_{g\in G_t}s(t-t_g)}\sum_{g\in G_t}s(t-t_g)\,=\,1\,.
\end{eqnarray*}
Moreover, preceding as in the proof of Theorem \ref{th0} in \cite{cras-3}, it follows that 
\[\lim_{t\to\infty}\int \x^{\balpha}\,p^*_t\,d\lambda_{\bom}\,=\,\int \x^{\balpha}\,d\lambda_{\bom}\,,\quad\forall\balpha\in\N^n\,.\]
As $\bom$ is compact it implies that the sequence of probability measures $(\mu_t)_{t\in\N}\subset\mathscr{M}(\bom)_+$ 
converges to $\lambda_{\bom}$ for the weak-$\star$ topology of $\mathscr{M}(\bom)$. In other words (and in an informal language), the density 
$p^*_t$ of $\mu_t$ w.r.t. $\lambda_{\bom}$ behaves like the constant (equal to $1$) polynomial,
which can be viewed as a weaker version of \eqref{th0-2}.

\section{Conclusion}

SOS polynomials play a crucial role in the 
Moment-SOS hierarchies of upper and lower bounds through their use in 
certificates of positivity of real algebraic geometry. We have shown that
they are also related to the Christoffel function
in theory of approximation. Interestingly, the link is provided by interpreting a duality result in convex optimization
applied to a certain convex cone of polynomials and its dual cone of pseudo-moments. It also turns out that in this cone, the constant  polynomial  is strongly related
to the equilibrium measure of the semi-algebraic set associated with the convex cone. We hope that these interactions between different and seemingly disconnected fields will raise the curiosity of the optimization community and yield further developments.

\section*{Appendix}
\label{sec-appendix}
\begin{lemma}
\label{lem-logdet}
Let $\mathcal{S}^n$ be the space of real symmetric $n\times n$ matrices and let $\mathcal{S}^n_{++}\subset\mathcal{S}^n$ 
be the convex cone of real $n\times n$ positive definite matrices $\Q$ (denoted $\Q\succ0$). Then
\begin{equation}
\label{fenchel}
n+\log\mathrm{det}(\M)+\log\mathrm{det}(\Q)\,\leq\,\langle\M,\Q\rangle\,,\quad\forall 
\M\,,\Q\,,\in\,\mathcal{S}^n_{++}\,.
\end{equation}
with equality if and only if $\Q=\M^{-1}$.
\end{lemma}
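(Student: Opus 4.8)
The plan is to reduce the matrix inequality to $n$ copies of a one-dimensional inequality by a simultaneous congruence transformation, and then to analyze equality termwise. First I would observe that since $\M\succ0$ it admits a positive definite square root $\M^{1/2}$, and the matrix $\M^{1/2}\Q\M^{1/2}$ is symmetric positive definite; denote its eigenvalues by $\lambda_1,\ldots,\lambda_n>0$. The point of this transformation is that all three quantities in \eqref{fenchel} become symmetric functions of the $\lambda_i$: by cyclicity of the trace, $\langle\M,\Q\rangle=\mathrm{tr}(\M\Q)=\mathrm{tr}(\M^{1/2}\Q\M^{1/2})=\sum_i\lambda_i$; and by multiplicativity of the determinant, $\det(\M)\det(\Q)=\det(\M^{1/2}\Q\M^{1/2})=\prod_i\lambda_i$, so that $\log\det(\M)+\log\det(\Q)=\sum_i\log\lambda_i$.

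With these substitutions, the claimed inequality \eqref{fenchel} is exactly equivalent to
\[
\sum_{i=1}^n\bigl(\lambda_i-1-\log\lambda_i\bigr)\,\geq\,0\,.
\]
The key elementary fact is then the scalar inequality $\log x\leq x-1$ for every $x>0$, with equality if and only if $x=1$; this is just the statement that the graph of the concave function $\log$ lies below its tangent line at $x=1$. Applying it to each $\lambda_i$ makes every summand nonnegative, which proves \eqref{fenchel}.

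For the equality case I would argue that a sum of nonnegative terms vanishes if and only if each term vanishes, i.e. $\lambda_i=1$ for all $i$. Since the $\lambda_i$ are the eigenvalues of the symmetric positive definite matrix $\M^{1/2}\Q\M^{1/2}$, having all of them equal to $1$ forces $\M^{1/2}\Q\M^{1/2}=\bI$, which rearranges to $\Q=\M^{-1}$; conversely $\Q=\M^{-1}$ gives $\langle\M,\Q\rangle=\mathrm{tr}(\bI)=n$ and $\log\det(\M)+\log\det(\M^{-1})=0$, so that \eqref{fenchel} holds with equality. There is no serious obstacle here: the entire argument hinges on the single scalar inequality, and the only point requiring a little care is the equality analysis, where one must pass cleanly from $\lambda_i\equiv1$ back to the matrix identity $\Q=\M^{-1}$, using that $\M^{1/2}\Q\M^{1/2}$ is diagonalizable with a full set of unit eigenvalues.
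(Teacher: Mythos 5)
Your proof is correct, but it takes a genuinely different route from the paper. You reduce the inequality to the spectrum of $\M^{1/2}\Q\M^{1/2}$: by cyclicity of the trace and multiplicativity of the determinant all three terms become symmetric functions of its eigenvalues $\lambda_i>0$, and the claim collapses to the scalar inequality $\log x\leq x-1$ applied termwise, with the equality case handled by noting that $\lambda_i\equiv 1$ forces $\M^{1/2}\Q\M^{1/2}=\bI$. The paper instead recognizes \eqref{fenchel} as the (concave analogue of the) Legendre--Fenchel inequality for $f(\Q)=\log\mathrm{det}(\Q)$: it computes the conjugate $f^*(\M)=n+\log\mathrm{det}(\M)$ on $\mathcal{S}^n_{++}$ and invokes $f^*(\M)+f(\Q)\leq\langle\M,\Q\rangle$. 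Your argument is more elementary and self-contained, and it treats the equality case explicitly and cleanly, whereas the paper leaves both the computation of $f^*$ and the characterization of equality (which follows from $\Q$ attaining the infimum defining $f^*(\M)$, i.e.\ $\M=\Q^{-1}$ by the first-order condition) largely implicit. What the paper's route buys is thematic coherence: the conjugacy of $\log\mathrm{det}$ with itself (up to the constant $n$) is exactly the duality underlying the $\log\mathrm{det}$ optimization pair \eqref{eq:primal}--\eqref{eq:dual} and Nesterov's representation in Lemma \ref{lem:nesterov}, so the Fenchel formulation is the one that generalizes to the use made of the lemma in Theorem \ref{th:primal-dual}.
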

\begin{proof}
  Consider the concave function
\[f:\: \mathcal{S}^n\to \R\cup\{-\infty\}\,\quad \Q\mapsto f(\Q)\,=\,\left\{\begin{array}{l}
\log\mathrm{det}(\Q)\:\mbox{ if $\Q\in\mathcal{S}^n_{++}$,}\\
-\infty\:\mbox{ otherwise,}\end{array}\right.\]
and let $f^*$ be its (concave analogue) of Legendre-Fenchel conjugate, i.e.,
\[\M\,\mapsto f^*(\M)\,:=\,\inf_{\Q\in\mathcal{S}^n}\langle \M,\Q\rangle-f(\Q)\,.\]
It turns out that
\[f^*(\M)\,=\,\left\{\begin{array}{l}n+\log\mathrm{det}(\M)\,(=\,n+f(\M))\mbox{ if $\M\,\in\,\mathcal{S}^n_{++}$ ,}\\
-\infty\mbox{ otherwise.}\end{array}\right.\]
Hence the concave analogue of Legendre-Fenchel inequality states that
\[f^*(\M)+f(\Q)\,\leq\,\langle \M,\Q\rangle\,,\quad\forall \M\,,\Q\,\in\,\mathcal{S}^n\,,\]
and yields  \eqref{fenchel}.
\end{proof}

\end{document}